\def\Z{\mathbb{Z}}
\def\S{\mathbb{S}}
\def\C{\mathbb{C}}
\def\V{\mathbb{V}}
\newcommand{\cali}{\mathcal{I}}
\newcommand{\calu}{\mathcal{U}}
\newcommand{\calh}{\mathcal{H}}
\DeclareMathOperator{\Hom}{\textup{Hom}}
\DeclareMathOperator{\Irr}{\textup{Irr}}
\newtheorem{Th}{Theorem}[section]
\newtheorem{Pro}[Th]{Proposition}
\newtheorem{Lem}[Th]{Lemma}
\newtheorem{Cor}[Th]{Corollary}
\theoremstyle{definition}
\newtheorem{Def}[Th]{Definition}
\newtheorem{Ex}[Th]{Example}
\newtheorem{Rem}[Th]{Remark}
\numberwithin{equation}{section}
\begin{document}

\newcommand{\arXivNumber}{2001.02164}

\renewcommand{\PaperNumber}{041}

\FirstPageHeading

\ShortArticleName{A Decomposition of Twisted Equivariant $K$-Theory}

\ArticleName{A Decomposition of Twisted Equivariant $\boldsymbol{K}$-Theory}

\Author{Jos\'e Manuel G\'OMEZ and Johana RAM\'IREZ}
\AuthorNameForHeading{J.M.~G\'omez and J.~Ram\'irez}
\Address{Escuela de Matem\'aticas, Universidad Nacional de Colombia, Medell\'in, Colombia}
\Email{\href{mailto:jmgomez0@unal.edu.co}{jmgomez0@unal.edu.co}, \href{mailto:jramirezg@unal.edu.co}{jramirezg@unal.edu.co}}

\ArticleDates{Received July 13, 2020, in final form April 15, 2021; Published online April 21, 2021}

\Abstract{For $G$ a finite group, a normalized 2-cocycle $\alpha\in Z^{2}\big(G,{\mathbb S}^{1}\big)$ and $X$ a $G$-space on which a normal subgroup $A$ acts trivially, we show that the $\alpha$-twisted $G$-equivariant $K$-theory of $X$ decomposes as a direct sum of twisted equivariant $K$-theories of $X$ parametrized by the orbits of an action of $G$ on the set of irreducible $\alpha$-projective representations of $A$. This generalizes the decomposition obtained in~[G\'omez~J.M., Uribe~B., \textit{Internat.~J. Math.} \textbf{28} (2017), 1750016, 23~pages, arXiv:1604.01656] for equivariant $K$-theory. We also explore some examples of this decomposition for the particular case of the dihedral groups $D_{2n}$ with~$n\ge 2$ an even integer.}

\Keywords{twisted equivariant $K$-theory; $K$-theory; finite groups}

\Classification{19L50; 19L47}

\section{Introduction}

In the past few years there has been a growing interest in studying
twisted $K$-theory motivated by its appearance in string theory and also due
to the celebrated theorem of Freed, Hokpins and~Teleman (see~\cite[Theorem~1]{FHT}).
In this article we study $G$-equivariant twisted $K$-theory when~$G$ is a finite group.
Our main goal is to show that, under suitable hypothesis, the canonical decomposition
theorem for projective representations can be used to obtain a decomposition for~twisted
$G$-equivariant $K$-theory as a direct sum of other twisted equivariant $K$-theories,
thus generalizing the work in~\cite{GomezUribe}, where a similar decomposition was obtained for
equivariant $K$-theory.

Suppose that we have a short exact sequence of finite groups
\begin{gather*}
1\rightarrow A\rightarrow G\stackrel{\pi}{\rightarrow} Q\rightarrow 1.
\end{gather*}
Let $X$ be a compact and Hausdorff $G$-space on which $A$ acts trivially and fix
$\alpha$ a normalized $2$-cocycle on $G$ with values in $\S^{1}$. Associated to the
cocycle $\alpha$ we have a central extension
\begin{gather*}
1\rightarrow \S^{1}\rightarrow
\widetilde{G}_{\alpha}\rightarrow G\rightarrow 1
\end{gather*}
and the action of $G$ on $X$ can be extended to an action of $\widetilde{G}_{\alpha}$ on $X$
in such a way that the central factor $\S^{1}$ acts trivially. The $\alpha$-twisted
$G$-equivariant $K$-theory of $X$, $^{\alpha}K^{\ast}_{G}(X)$, is~constructed
\mbox{using} $\widetilde{G}_{\alpha}$-equivariant vector bundles on which the central factor
$\S^{1}$ acts by multiplication of~scalars. The main object of study in this work are
the twisted $K$-groups $^{\alpha}K^{\ast}_{G}(X)$.

Representations of the group
$\widetilde{G}_{\alpha}$ on which the central factor $\S^{1}$
acts by multiplication of~sca\-lars are in a one to one correspondence
with $\alpha$-projective representations of $G$. Via this correspondence we can use
the classical tools of projective representations to study the twisted $K$-groups
$^{\alpha}K^{\ast}_{G}(X)$. To this end we show in Section~\ref{Section 2} that,
if $\Irr^{\alpha}(A)$ denotes the set of isomorphism classes of $\alpha$-projective
representations of $A$, then there is an action of $G$ on $\Irr^{\alpha}(A)$
and this action factors through an action of $Q$ on $\Irr^{\alpha}(A)$.
Given an isomorphism class $[\tau]\in \Irr^{\alpha}(A)$ let
$Q_{[\tau]}$ denote the isotropy subgroup at $[\tau]$. Using Lemma~\ref{twistedcocycle}
we show that we can associate to each $[\tau]\in \Irr^{\alpha}(A)$
a normalized $2$-cocycle defined on $Q_{[\tau]}$ with values in $\S^{1}$. This is
precisely the data needed to construct a twisted version of $Q_{[\tau]}$-equivariant
$K$-theory. With this in mind, the main result of this article is the following theorem.

\begin{Th}\label{main}
Suppose that $A$ is a normal subgroup of a finite group $G$. Let $\alpha$
be a normalized $2$-cocycle on $G$ with values in $\S^{1}$ and $X$ a
compact $G$-space on which $A$ acts trivially.
Then there is a natural isomorphism
\begin{align*}
\Phi_{X}\colon \ \phantom{i}^{\alpha}K_{G}^{\ast}(X)&\rightarrow
\bigoplus_{[\tau]\in G\backslash \Irr^{\alpha}(A)}\phantom{i}
^{\beta_{\tau,\alpha}}K^{\ast}_{Q_{[\tau]}}(X),
\\
[E]&\mapsto \bigoplus_{[\tau]\in G\backslash \Irr^{\alpha}(A)}
\big[\Hom_{\widetilde{A}_{\alpha}}(\V_{\tau},E)\big].
\end{align*}
\end{Th}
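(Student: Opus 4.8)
The plan is to establish the isomorphism first at the level of twisted equivariant vector bundles and then pass to $K$-theory by formal arguments. Fix a class in ${}^{\alpha}K_{G}^{\ast}(X)$ represented by a $\widetilde{G}_{\alpha}$-equivariant vector bundle $E\to X$ on which the central $\S^{1}\subset\widetilde{G}_{\alpha}$ acts by scalar multiplication. Since $A$ acts trivially on $X$, restricting the action to $\widetilde{A}_{\alpha}$ makes each fiber $E_{x}$ into an $\alpha|_{A}$-projective representation of $A$; that is, $\widetilde{A}_{\alpha}$ acts fiberwise with $\S^{1}$ acting by scalars. Fix a set of representatives of the isomorphism classes of irreducible $\alpha$-projective representations of $A$, with $\tau$ realized on $\V_{\tau}$. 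The first step is to show that $x\mapsto\Hom_{\widetilde{A}_{\alpha}}(\V_{\tau},E_{x})$ is a locally trivial vector bundle $\Hom_{\widetilde{A}_{\alpha}}(\V_{\tau},E)$ over $X$; this is a standard local-triviality argument using that $E$ is, locally over the trivial $A$-space $X$, equivariantly trivial as an $\widetilde{A}_{\alpha}$-bundle. The evaluation maps then assemble into a natural isomorphism of vector bundles
\begin{gather*}
\bigoplus_{[\tau]\in\Irr^{\alpha}(A)}\V_{\tau}\otimes\Hom_{\widetilde{A}_{\alpha}}(\V_{\tau},E)\;\stackrel{\sim}{\longrightarrow}\;E,
\end{gather*}
which is the fiberwise version of the decomposition theorem for $\alpha$-projective representations of $A$.

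The second step analyzes the residual symmetry. The $\widetilde{G}_{\alpha}$-action on $E$ permutes the isotypic sub-bundles $E_{[\tau]}:=\V_{\tau}\otimes\Hom_{\widetilde{A}_{\alpha}}(\V_{\tau},E)$ according to the action of $G$ on $\Irr^{\alpha}(A)$ constructed in Section~\ref{Section 2}: an element of $\widetilde{G}_{\alpha}$ lying over $g\in G$ carries $E_{[\tau]}$ to $E_{[g\cdot\tau]}$. Grouping the summands into $G$-orbits therefore yields a $\widetilde{G}_{\alpha}$-invariant decomposition $E\cong\bigoplus_{[\tau]\in G\backslash\Irr^{\alpha}(A)}E_{\mathcal{O}_{\tau}}$ with $E_{\mathcal{O}_{\tau}}=\bigoplus_{[\tau']\in\mathcal{O}_{\tau}}E_{[\tau']}$. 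The stabilizer $\widetilde{G}_{\alpha,[\tau]}$ of $[\tau]$ in $\widetilde{G}_{\alpha}$ acts on $E_{[\tau]}$, and since $A$ acts there only through the tensor factor $\V_{\tau}$ (via $\tau$ itself), this descends to a \emph{projective} action of $Q_{[\tau]}=G_{[\tau]}/A$ on $\Hom_{\widetilde{A}_{\alpha}}(\V_{\tau},E)$, whose scalar ambiguity — coming from Schur's lemma applied to the conjugates of $\tau$ — is governed precisely by the $2$-cocycle $\beta_{\tau,\alpha}$ of Lemma~\ref{twistedcocycle}. Consequently $\Hom_{\widetilde{A}_{\alpha}}(\V_{\tau},E)$ represents a class in ${}^{\beta_{\tau,\alpha}}K^{\ast}_{Q_{[\tau]}}(X)$, so $\Phi_{X}$ is defined.

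To see that $\Phi_{X}$ is well defined and bijective, note that $\Hom_{\widetilde{A}_{\alpha}}(\V_{\tau},-)$ is additive on twisted equivariant bundles and that equivariant short exact sequences over a compact base split, so $\Phi_{X}$ descends to a natural homomorphism on $K^{0}$, and it extends to all of $K^{\ast}$ by the usual suspension argument, the construction commuting with the operations that define the $\Z/2$-graded groups. For the inverse map I would send $\bigoplus_{[\tau]}[F_{\tau}]$ to $\bigoplus_{[\tau]}\big[\mathrm{Ind}_{\widetilde{G}_{\alpha,[\tau]}}^{\widetilde{G}_{\alpha}}\big(\V_{\tau}\otimes F_{\tau}\big)\big]$, where $\V_{\tau}\otimes F_{\tau}$ carries the $\widetilde{G}_{\alpha,[\tau]}$-action built from $\tau$ on the first factor and from the $\beta_{\tau,\alpha}$-twisted $Q_{[\tau]}$-action on the second, so that the two twists cancel along the central $\S^{1}$. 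That these maps are mutually inverse follows from the orbit decomposition above, the identity $\mathrm{Ind}_{\widetilde{G}_{\alpha,[\tau]}}^{\widetilde{G}_{\alpha}}(E_{[\tau]})\cong E_{\mathcal{O}_{\tau}}$, and the reconstruction isomorphism of the first paragraph. Naturality in $X$ is immediate because restriction, $\Hom$, evaluation, and induction are all functorial in $X$.

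The step I expect to be the main obstacle is the precise identification in the second paragraph: checking that the scalar ambiguity of the $Q_{[\tau]}$-action on $\Hom_{\widetilde{A}_{\alpha}}(\V_{\tau},E)$ is exactly the cocycle $\beta_{\tau,\alpha}$ supplied by Lemma~\ref{twistedcocycle}, and not merely a cocycle in its cohomology class. This requires fixing compatible set-theoretic sections of $\widetilde{G}_{\alpha}\to G$ and of $G_{[\tau]}\to Q_{[\tau]}$, choosing intertwiners between $\tau$ and its conjugates, and then carefully tracking how $\alpha$, these intertwiners, and the central $\S^{1}$ combine — a computation that is conceptually routine but where the ordering and normalization conventions must be matched to those used in defining $\beta_{\tau,\alpha}$. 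The only other technical point is the local triviality of the $\Hom$-bundle, which follows from the equivariant local triviality of $E$ over the trivial $A$-space $X$.
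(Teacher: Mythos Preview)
Your proposal is correct and follows essentially the same approach as the paper: fiberwise canonical decomposition of $E$ into $\widetilde{A}_{\alpha}$-isotypic pieces, grouping these into $G$-orbits to obtain a $\widetilde{G}_{\alpha}$-invariant splitting, identifying the residual action on $\Hom_{\widetilde{A}_{\alpha}}(\V_{\tau},E)$ as a $\beta_{\tau,\alpha}$-projective $Q_{[\tau]}$-action via Lemma~\ref{twistedcocycle}, and constructing the inverse by induction (which the paper writes explicitly as $L_{F}=\bigoplus_{j}\big[(g_{j},1)^{-1}\big]^{\ast}F$). The only difference is organizational: the paper factors the argument through an intermediate theory $K^{\ast}_{(\widetilde{G}_{[\tau]})_{\alpha},\tau}(X)$ (Theorem~\ref{ThD}) before identifying it with ${}^{\beta_{\tau,\alpha}}K^{\ast}_{Q_{[\tau]}}(X)$ (Corollary~\ref{identification}), whereas you go directly; the computation you flag as the main obstacle is precisely the content of the paper's Theorem~\ref{teo,bij}.
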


Explicit formulas for the cocycles $\beta_{\tau,\alpha}$ that appear in the previous theorem
are described in~Section~\ref{Section 2}. Observe that when $\alpha$ is the trivial cocycle
then $\phantom{i}^{\alpha}K_{G}^{\ast}(X)$ agrees
with $K_{G}^{*}(X)$ and the previous decomposition agrees with~\cite[Theorem 3.2]{GomezUribe}
in this case. Therefore Theorem~\ref{main} generalizes~\cite[Theorem 3.2]{GomezUribe}. In fact,
Theorem~\ref{main} is proved using ideas similar to the ones used to prove
\cite[Theorem 3.2]{GomezUribe}.

We remark that the main idea used to prove Theorem~\ref{main} is well known and is often referred
to in the literature as ``Mackey decomposition'' or the ``Mackey machine''. Moreover, similar decomposition
theorems have been obtained using these ideas for the case of equivariant K-theory. To the knowledge of
the authors the first instance where such a decomposition appears is~\cite{Wassermann}.\footnote{The authors would like to thank the editor for pointing out this reference that was not
known previously to them.} In 1981 in his Ph.D.~Thesis Wassermann used the ``Mackey machine'' in
the context of $C^{*}$-algebras to derive a decomposition for equivariant K-theory in the particular
case that a compact Lie group $G$ acts on a locally compact space $X$ with one orbit type.
This means that for every $x\in X$ the isotropy subgroup $G_{x}$ is conjugated to a fixed subgroup $A$
(see~\cite[Theorem~7]{Wassermann}). The decomposition theorem obtained in~\cite{GomezUribe} for equivariant
K-theory is quite similar to the one obtained by Wassermann with the difference that the result derived in~\cite{GomezUribe} works for a finite group~$G$ acting on a compact and Hausdorff space in $X$ such a
way that all the isotropy subgroups~$G_{x}$ contain a fixed group $A$. In particular this means that
the action does not have to~have just one orbit type. Recently, ``Mackey decomposition'' was used in~\cite{ABV}
to~obtain a similar decomposition result for equivariant K-theory in the case of proper actions of a~compact Lie
group~$G$.

Theorem~\ref{main} could also be proved using the work of Freed, Hopkins and
Teleman~\cite{FHT}. However, we have chosen to prove it directly as to obtain an explicit
description of this decomposition. Also, we chose to work with finite groups to obtain explicit
formulas for the cocycles used to~twist equivariant $K$-theory in this decomposition.
Obtaining explicit formulas for such cocycles is one of the main contributions of this
work. Theorem~\ref{main} also holds in general for compact Lie groups, a proof in this context can be
obtained generalizing the work in~\cite{AGU}.

The outline of this article is as follows. In Section~\ref{Section 2} we review some basic definitions
of~pro\-jec\-tive representations that we use throughout this article. Section~\ref{Section 3} is
the main part of the article, Theorem~\ref{main} is proved there. In Section~\ref{Section 4}
we use Theorem~\ref{main} to obtain a formula for the third differential in the
Atiyah--Hirzebruch spectral sequence for $\alpha$-twisted $G$-equivariant $K$-theory
under suitable hypotheses. Finally, in Section~\ref{Section 5} we explore some examples of
Theorem~\ref{main} for the particular case of the dihedral group $D_{2n}$ with $n\ge 1$ an even integer.

\section{Projective representations}\label{Section 2}

In this section we recall some basic definitions and properties of projective representations that
will be used throughout this article.

\subsection{Basic definitions}

\begin{Def}
Let $G$ be a finite group and $V$ a finite dimensional complex
vector space. A~map $\rho\colon G\rightarrow {\rm GL}(V)$ is called a
\textit{projective representation} of $G$ if there exists a
function
\begin{gather*}
\alpha\colon \ G\times G \rightarrow \C^{\ast}
\end{gather*}
such that
\begin{gather}
\rho(g)\rho(h)=\alpha(g,h)\rho(gh)\label{ecu:rep-proj}
\end{gather}
for all $g,h\in G$ and $\rho(1)={\rm Id}_{V}$.
\end{Def}
	
Note that if $\rho$ satisfies equation~(\ref{ecu:rep-proj}) then the function $\alpha$ defines a
$\C^{\ast}$-valued normalized 2-cocycle on $G$; that is, for all $g,h,k\in G$ we have:
\begin{gather*}
\alpha(gh,k)\alpha(g,h)=\alpha(g,hk)\alpha(h,k),
\\
\alpha(g,1)=\alpha(1,g)=1.
\end{gather*}
To stress the dependence of $\rho$ on $V$ and $\alpha$, we shall
often refer to $\rho$ as an $\alpha$-representation of $G$
on the space $V$ or, simply as an $\alpha$-representation of
$G$, if $V$ is not pertinent to the discussion.

\begin{Rem}
If $\alpha$ is the trivial cocycle; that is, if $\alpha(g,h)=1$ for all $g,h\in G$, then
$\alpha$-representations of $G$ are simply ordinary representations of $G$.
\end{Rem}

\begin{Def}
Suppose that $\rho_{i}\colon G\rightarrow {\rm GL}(V_{i})$ with $(i=1,2)$ are two $\alpha$-representations.
\begin{enumerate}\itemsep=0pt
\item A linear map $\varphi\colon V_{1}\to V_{2}$ is said to be a map of projective representations or a
$G$-map if~for any $g\in G$ and any $v\in V_{1}$ we have $\varphi(\rho_{1}(g)v)=\rho_{2}(g)\varphi(v)$.
We write $\Hom_{G}(V_{1},V_{2})$ for the set of $G$-morphisms from $V_{1}$ to $V_{2}$.

\item The $\alpha$-representations $V_{1}$ and $V_{2}$
are said to be \textit{linearly equivalent} or \textit{isomorphic} if
there exists a $G$-map $f\colon V_{1}\to V_{2}$ that is an isomorphism of vector spaces.
In other words, $V_{1}$ and $V_{2}$ are isomorphic if there is a vector space
isomorphism $f\colon V_{1}\rightarrow V_{2}$ such that $\rho_{2}(g)=f\rho_{1}(g)f^{-1}$ for all $g\in G$.
\end{enumerate}
\end{Def}

Just as in the case of the ordinary theory of representations of groups
we have similar notions for projective representations such as irreducible representations
and unitary representations. Given a $\C^{\ast}$-valued normalized 2-cocycle $\alpha$ on
$G$ we denote by $\Irr^{\alpha}(G)$ the set of isomorphism classes of
complex irreducible $\alpha$-representations of $G$.
If $\rho\colon G\to {\rm GL}(V)$ is an irreducible $\alpha$-representation of $G$ then $[\rho]\in \Irr^{\alpha}(G)$
denotes the corresponding isomorphism class. Observe that the direct sum of two $\alpha$-representations
is also an $\alpha$-representation. Thus we can form the monoid of isomorphism classes of $\alpha$-representations.
The $\alpha$-twisted representation group of $G$, denoted by $R^{\alpha}(G)$,
is defined as the associated Grothendieck group. As an abelian group $R^{\alpha}(G)$ is a free abelian
group with one generator for each element in $\Irr^{\alpha}(G)$.
We remark that the classical results of representation theory
such as complete reducibility and Schur's lemma also hold for the case of projective representations.
We refer the reader to~\cite{Karpilovsky2} for
the basic theory of~projective representations.

\begin{Ex}\label{ej,D2n}
Consider the dihedral group $D_{2n}$ of order $2n$ defined by
\begin{gather*}
D_{2n}=\big\langle a,b\mid a^{n}=b^{2}=1,\, bab=a^{-1}\big\rangle.
\end{gather*}
For such groups we have
\begin{gather*}
H^{2}\big(D_{2n},\S^{1}\big)\cong
\begin{cases}
1 &\text{if}\quad n \text{ is odd},
\\
\Z/2 & \text{if}\quad n \text{ is even}.
\end{cases}
\end{gather*}
In this example we only consider the case where $n$ is even as otherwise we will obtain
usual representations. Let $n\geq 2$ be an even integer, $\epsilon$ a primitive $n$-th root of
unity in $\C$ and let
\begin{gather*}
\alpha\colon\ D_{2n}\times D_{2n}\rightarrow \S^{1}
\end{gather*}
be the function defined by
\begin{gather*}
\alpha\big(a^{j},a^{k}b^{l}\big)=1 \qquad \text{and} \qquad
\alpha\big(a^{j}b,a^{k}b^{l}\big)=\epsilon^{k}\quad \text{for}\quad
0\leq j,k\leq n-1 \quad \text{and}\quad l=0,1.
\end{gather*}
The function $\alpha$ defines a normalized $2$-cocycle on $D_{2n}$ with values in $\S^{1}$ whose
corresponding cohomology class is a generator in $H^{2}\big(D_{2n},\S^{1}\big)\cong \Z/2$.
For each $i\in\{1,2,\dots,n/2\}$ put
\begin{gather*}
A_{i}=\begin{pmatrix}
\epsilon^{i} & 0 \\
0 & \epsilon^{1-i}
\end{pmatrix}
\qquad \text{and} \qquad
B_{i}=\begin{pmatrix}
0 & 1\\
1 &0
\end{pmatrix}\!.
\end{gather*}
Consider the map
\begin{gather*}
\tau_{i}\colon\ D_{2n}\rightarrow {\rm GL}_{2}(\C)
\end{gather*}
defined by
\begin{gather*}
\tau_{i}\big(a^{k}b^{l}\big)=A_{i}^{k}B_{i}^{l} \qquad \text{for} \quad 0\leq k\leq n-1 \quad
\text{and}\quad l=0,1.
\end{gather*}
These assignments determine the irreducible, non-equivalent $\alpha$-representations of $D_{2n}$
so that $\Irr^{\alpha}(D_{2n})=\big\{[\tau_{1}],\dots, [\tau_{n/2}]\big\}$ and as an abelian group we
have an isomorphism $R^{\alpha}(D_{2n})\cong \bigoplus_{i=1}^{n/2}\Z[\tau_{i}]$
(see for example~\cite[Chapter 5, Theorem 7.1]{Karpilovsky3}).
\end{Ex}

A key feature of projective representations is that we also have the following canonical
decomposition whose proof can be obtained in a similar way as in the case of regular
representations.

\begin{Th}[canonical decomposition]\label{can,decom}
Suppose that $\alpha$ is a
normalized $2$-cocycle of a finite group $G$ with values in $\S^{1}$. Let $W$ be a
finite-dimensional $\alpha$-representation. Then the assignment
\begin{align*}
\gamma\colon\ \bigoplus_{[\rho]\in \Irr^{\alpha}(G)}V_{\rho}\otimes \Hom_{G}(V_{\rho},W)
&\rightarrow W,
\\
v\otimes f&\mapsto f(v)
\end{align*}
defines an isomorphism of $\alpha$-representations.
\end{Th}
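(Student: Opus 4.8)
The plan is to mimic the proof of the classical canonical (isotypic) decomposition for ordinary representations, checking at each stage that the projective cocycle $\alpha$ causes no obstruction. First I would observe that the map $\gamma$ is well-defined and is a map of $\alpha$-representations. Well-definedness is immediate since $v\otimes f\mapsto f(v)$ is clearly bilinear; to see it is a $G$-map, note that on the summand indexed by $[\rho]$ the group $G$ acts on $V_\rho\otimes\Hom_G(V_\rho,W)$ through the first factor via $\rho$ (with the second factor a trivial $G$-module), so $g\cdot(v\otimes f)=\rho(g)v\otimes f$, and then $\gamma(g\cdot(v\otimes f))=f(\rho(g)v)=\rho_W(g)f(v)=\rho_W(g)\gamma(v\otimes f)$, using precisely that $f\in\Hom_G(V_\rho,W)$. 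Because $\gamma$ respects each isotypic block separately, it is a morphism of $\alpha$-representations on the whole direct sum.

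Next I would establish that $\gamma$ is surjective. Here I would invoke complete reducibility for $\alpha$-representations (stated in the excerpt as holding just as in the ordinary case): write $W\cong\bigoplus_{j}W_j$ with each $W_j$ irreducible, so $W_j\cong V_{\rho_j}$ for some $[\rho_j]\in\Irr^\alpha(G)$, and let $\iota_j\colon V_{\rho_j}\hookrightarrow W$ be the inclusion, which lies in $\Hom_G(V_{\rho_j},W)$. Then for $w=\sum_j w_j\in W$ with $w_j\in W_j$, writing $w_j=\iota_j(v_j)$, we have $\gamma\big(\sum_j v_j\otimes\iota_j\big)=\sum_j\iota_j(v_j)=w$, so $\gamma$ is onto.

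For injectivity I would compare dimensions, which is the cleanest route and where the only real bookkeeping lies. Grouping the chosen irreducible summands of $W$ by isomorphism class, let $m_{[\rho]}$ be the multiplicity of $[\rho]$ in $W$, so $\dim W=\sum_{[\rho]}m_{[\rho]}\dim V_\rho$. The key input is Schur's lemma for $\alpha$-representations, which gives $\Hom_G(V_\rho,V_{\rho'})=0$ for $[\rho]\ne[\rho']$ and $\Hom_G(V_\rho,V_\rho)\cong\C$; combined with additivity of $\Hom_G(V_\rho,-)$ over direct sums, this yields $\dim\Hom_G(V_\rho,W)=m_{[\rho]}$. Hence $\dim\big(\bigoplus_{[\rho]}V_\rho\otimes\Hom_G(V_\rho,W)\big)=\sum_{[\rho]}\dim V_\rho\cdot m_{[\rho]}=\dim W$, and a surjective linear map between finite-dimensional spaces of equal dimension is an isomorphism.

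The main obstacle is essentially conceptual rather than computational: one must be sure that Schur's lemma, complete reducibility, and the additivity of $\Hom_G$ genuinely carry over to the projective setting. This is standard — an $\alpha$-representation of $G$ is the same as an honest representation of the twisted group algebra $\C_\alpha G$ (equivalently, of the central extension $\widetilde G_\alpha$ on which the central $\S^1$ acts by scalars), and $\C_\alpha G$ is semisimple since $\alpha$ takes values in $\S^1\subset\C^\ast$, so the entire apparatus of semisimple module theory applies verbatim. Once this dictionary is in place the argument above goes through with no changes from the ordinary case, which is why the statement notes the proof is "obtained in a similar way."
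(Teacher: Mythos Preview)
Your proposal is correct and is precisely the approach the paper has in mind: the paper gives no proof beyond remarking that it ``can be obtained in a similar way as in the case of regular representations,'' and what you have written is exactly that classical argument (Schur's lemma plus complete reducibility and a dimension count), together with the observation that it transfers verbatim to the projective setting via the twisted group algebra or the central extension $\widetilde{G}_\alpha$.
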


Suppose now that $\alpha$ is a normalized 2-cocycle on $G$ with values in $\S^{1}$.
We can associate to $\alpha$ a central extension of $G$ by $\S^{1}$ in the following way.
As a set define
\begin{gather*}
\widetilde{G}_{\alpha}=\big\{(g,z)\mid g\in G, z\in \S^{1}\big\}.
\end{gather*}
The product structure in $\widetilde{G}_{\alpha}$ is given by
the assignment
\begin{gather*}
(g_{1},z_{1})(g_{2},z_{2}):=\left(g_{1}g_{2},\alpha(g_{1},g_{2})z_{1}z_{2}\right).
\end{gather*}
This way $\widetilde{G}_{\alpha}$ is a compact Lie group that fits
into a central extension
\begin{gather*}
1\rightarrow \S^{1}\rightarrow
\widetilde{G}_{\alpha}\rightarrow G\rightarrow 1.
\end{gather*}
Let $\rho\colon G\to {\rm GL}(V)$ be an $\alpha$-representation of $G$. If we define
$\tilde{\rho}\colon \widetilde{G}_{\alpha}\to {\rm GL}(V)$
by $\tilde{\rho}(g,z)=z\rho(g)$ then $\tilde{\rho}$ defines a representation of
$\widetilde{G}_{\alpha}$ on which the central factor $\S^{1}$ acts by multiplication of~scalars.
Conversely, if $\tilde{\rho}\colon \widetilde{G}_{\alpha}\to {\rm GL}(V)$ is a representation of
$\widetilde{G}_{\alpha}$ on which the central factor~$\S^{1}$ acts by multiplication of scalars
then the function $\rho\colon G\to {\rm GL}(V)$ given by $\rho(g)=\tilde{\rho}(g,1)$ defines an
$\alpha$-representation of $G$. The above assignment defines a one to one correspondence between
$\alpha$-representations of $G$ and representations of $\widetilde{G}_{\alpha}$ on which the
central factor $\S^{1}$ acts by multiplication of scalars. Via this correspondence we will
switch back and forth between $\alpha$-representations of $G$ and representations of
$\widetilde{G}_{\alpha}$ on which the central factor $\S^{1}$ acts by multiplication of scalars
without explicitly mentioning it. In addition, if $V_{1}$ and $V_{2}$ are two
$\alpha$-repre\-sentations of $G$ then having a map of projective representations
$f\colon V_{1}\to V_{2}$ is equivalent to~having a linear map
$f\colon V_{1}\to V_{2}$ that is $\widetilde{G}_{\alpha}$-equivariant. With this correspondence
in mind we~will also identify $\Hom_{G}(V_{1},V_{2})$ with
$\Hom_{\widetilde{G}_{\alpha}}(V_{1},V_{2})$ without explicitly mentioning it.

\subsection{Cocycles and projective representations}\label{section cocycles}

Suppose now that $A$ is a normal subgroup of a finite group $G$ so that we have a short exact
sequence
\begin{gather*}
1\rightarrow A\rightarrow G\stackrel{\pi}{\rightarrow} Q\rightarrow 1.
\end{gather*}
Assume that $\alpha$ is a normalized $2$-cocycle of $G$ with values in $\S^{1}$,
by restriction we can also see $\alpha$ as a cocycle defined on $A$.
Let us define an action of $G$ on the set $\Irr^{\alpha}(A)$ in the following way.
Given $\rho\colon A\rightarrow U(V_{\rho})$ an $\alpha$-representation and $g\in G$ we define
$g\cdot \rho\colon A\rightarrow U(V_{\rho})$ so that if~$a\in A$ we have:
\begin{gather}\label{def:act}
g\cdot \rho(a)=\alpha(g^{-1}a,g)\alpha(g,g^{-1}a)^{-1}\rho(g^{-1}ag)\in U(V_{\rho}).
\end{gather}

\begin{Pro}
The above assignment defines a left action of $G$ on $\Irr^{\alpha}(A)$. Furthermore, for all
$b\in A$, we have that $b\cdot\rho\cong\rho$ so that the action of $G$ on $\Irr^{\alpha}(A)$
factors to an action of~$Q=G/A$ on $\Irr^{\alpha}(A)$.
\end{Pro}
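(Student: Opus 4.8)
The plan is to transport the problem into the central extension $\widetilde{G}_{\alpha}$, where the cocycle factors appearing in~(\ref{def:act}) become ordinary conjugation. Let $\widetilde{A}_{\alpha}=\big\{(a,z)\mid a\in A,\ z\in\S^{1}\big\}\subseteq\widetilde{G}_{\alpha}$ be the preimage of $A$; since $A$ is normal in $G$, $\widetilde{A}_{\alpha}$ is a normal subgroup of $\widetilde{G}_{\alpha}$ containing the central $\S^{1}$. For $g\in G$ let $\phi_{g}\colon \widetilde{A}_{\alpha}\to\widetilde{A}_{\alpha}$ be conjugation by $(g,1)$, that is, $\phi_{g}(x)=(g,1)^{-1}x(g,1)$. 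Any two lifts of $g$ to $\widetilde{G}_{\alpha}$ differ by a central element, so $\phi_{g}$ does not depend on the choice of lift; in particular $\phi_{g}$ fixes $\S^{1}$ pointwise, $\phi_{1}=\mathrm{id}$, and $\phi_{h}\circ\phi_{g}=\phi_{gh}$ for all $g,h\in G$ (since $(g,1)(h,1)$ is a lift of $gh$). A short computation with the product of $\widetilde{G}_{\alpha}$, using the identity $\alpha(g,g^{-1})=\alpha(g,g^{-1}a)\alpha(g^{-1},a)$ (a direct consequence of the normalized $2$-cocycle condition), shows that
\begin{gather*}
\phi_{g}(a,1)=\big(g^{-1}ag,\ \alpha(g^{-1}a,g)\alpha(g,g^{-1}a)^{-1}\big)\qquad\text{for all }a\in A.
\end{gather*}

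Next I would invoke the correspondence recalled above between $\alpha$-representations $\rho$ of $A$ and representations $\tilde{\rho}$ of $\widetilde{A}_{\alpha}$ on which $\S^{1}$ acts by scalars, given by $\tilde{\rho}(a,z)=z\rho(a)$. Since $\phi_{g}$ is an automorphism of $\widetilde{A}_{\alpha}$ fixing $\S^{1}$ pointwise, $\tilde{\rho}\circ\phi_{g}$ is again such a representation, and the displayed formula shows that the $\alpha$-representation of $A$ corresponding to $\tilde{\rho}\circ\phi_{g}$ is exactly $a\mapsto\alpha(g^{-1}a,g)\alpha(g,g^{-1}a)^{-1}\rho(g^{-1}ag)$, i.e., $g\cdot\rho$ as defined in~(\ref{def:act}). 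In particular $g\cdot\rho$ is a bona fide $\alpha$-representation; it is irreducible whenever $\rho$ is, since a subspace of $V_{\rho}$ is $g\cdot\rho$-invariant if and only if it is $\rho$-invariant (as $a$ ranges over $A$, so does $g^{-1}ag$); and a linear map $f\colon V_{\rho}\to V_{\rho'}$ intertwines $\rho$ and $\rho'$ if and only if it intertwines $g\cdot\rho$ and $g\cdot\rho'$, because the scalar factor $\alpha(g^{-1}a,g)\alpha(g,g^{-1}a)^{-1}$ is the same on both sides. Hence $[\rho]\mapsto[g\cdot\rho]$ is a well-defined self-map of $\Irr^{\alpha}(A)$, and the identities $1\cdot\rho=\rho$ and $(gh)\cdot\rho=g\cdot(h\cdot\rho)$ follow at once from $\phi_{1}=\mathrm{id}$ and $\phi_{h}\circ\phi_{g}=\phi_{gh}$. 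This proves the first assertion. (Alternatively one could verify directly from~(\ref{def:act}) that $g\cdot\rho$ obeys the $\alpha$-cocycle relation and that the two action axioms hold; the computations are precisely the ones organized by passing to $\widetilde{G}_{\alpha}$.)

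For the second assertion, fix $b\in A$. The essential observation is that $b$ has a lift, namely $(b,1)$, lying inside $\widetilde{A}_{\alpha}$ itself, so $\phi_{b}$ is an \emph{inner} automorphism of $\widetilde{A}_{\alpha}$. Consequently $(\tilde{\rho}\circ\phi_{b})(x)=\tilde{\rho}(b,1)^{-1}\tilde{\rho}(x)\tilde{\rho}(b,1)$ for all $x\in\widetilde{A}_{\alpha}$, i.e., $\tilde{\rho}\circ\phi_{b}$ is conjugate to $\tilde{\rho}$ by the fixed invertible operator $\tilde{\rho}(b,1)=\rho(b)$; equivalently, $\rho(b)$ gives an isomorphism of $\alpha$-representations between $b\cdot\rho$ and $\rho$. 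Hence $b\cdot[\rho]=[\rho]$ for every $b\in A$ and every $[\rho]\in\Irr^{\alpha}(A)$, so $A$ acts trivially; since $(ga)\cdot[\rho]=g\cdot(a\cdot[\rho])=g\cdot[\rho]$ for all $g\in G$ and $a\in A$, the action of $G$ on $\Irr^{\alpha}(A)$ factors through an action of $Q=G/A$, as claimed.

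The only non-formal ingredient is the single cocycle identity $\alpha(g,g^{-1})=\alpha(g,g^{-1}a)\alpha(g^{-1},a)$ needed to match the conjugation map $\phi_{g}$ (computed from the lift $(g,1)$) with the explicit formula~(\ref{def:act}); it is immediate from $\alpha(gh,k)\alpha(g,h)=\alpha(g,hk)\alpha(h,k)$ with the substitution $h=g^{-1}$, $k=a$, together with $\alpha(1,a)=1$. I expect this translation into $\widetilde{G}_{\alpha}$ to be the crux of the argument: once it is made, the left-action axioms and the triviality of the $A$-action become the statements $\phi_{h}\circ\phi_{g}=\phi_{gh}$ and ``$\phi_{b}$ is inner for $b\in A$'', both of which are transparent.
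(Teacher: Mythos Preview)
Your proof is correct. The paper's own proof proceeds by direct cocycle manipulation: it verifies by hand that $(g\cdot\rho(a))(g\cdot\rho(b))=\alpha(a,b)(g\cdot\rho(ab))$, checks the two action axioms by expanding both sides and matching cocycle factors, and finally computes that $b\cdot\rho(a)=\rho(b)^{-1}\rho(a)\rho(b)$ for $b\in A$. Your route instead lifts everything to the central extension $\widetilde{G}_{\alpha}$ and recognizes~(\ref{def:act}) as nothing but $\tilde{\rho}\circ\phi_{g}$ for the conjugation automorphism $\phi_{g}$ of $\widetilde{A}_{\alpha}$; the action axioms and the triviality of the $A$-action then become the transparent facts $\phi_{h}\circ\phi_{g}=\phi_{gh}$ and ``$\phi_{b}$ is inner for $b\in A$''. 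This is exactly the alternative description the paper records in the Remark immediately following the Proposition, so you have in effect promoted that remark to a proof. The trade-off is that the paper's argument is self-contained at the level of cocycles (no extension needed), whereas yours buys conceptual clarity at the cost of one preliminary computation, namely matching $\phi_{g}(a,1)$ with the scalar in~(\ref{def:act}) via the identity $\alpha(g,g^{-1})=\alpha(g,g^{-1}a)\alpha(g^{-1},a)$, which you correctly isolate.
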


\begin{proof}
First we show that $g\cdot \rho$ is an $\alpha$-representation
of $A$. Indeed, for all $g\in G$ and $a,b \in A$ we have
\begin{align*}
(g\cdot \rho(a))(g\cdot\rho(b))&=
\big(\alpha\big(g^{-1}a,g\big)\alpha\big(g,g^{-1}a\big)^{-1}\rho\big(g^{-1}ag\big)\big)
\big(\alpha\big(g^{-1}b,g\big)\alpha\big(g,g^{-1}b\big)^{-1}\rho\big(g^{-1}bg\big)\big)
\\
&=\alpha\big(g^{-1}a,g\big)\alpha\big(g,g^{-1}a\big)^{-1}\alpha\big(g^{-1}b,g\big)\alpha\big(g,g^{-1}b\big)^{-1}
\alpha\big(g^{-1}ag,g^{-1}bg\big)
\\
&\phantom{=}\,\times\rho\big(g^{-1}abg\big)
=\alpha\big(g,g^{-1}a\big)^{-1}\alpha\big(g^{-1}ab,g\big)\alpha\big(g^{-1}a,b\big)\rho\big(g^{-1}abg\big)
\\
&=\alpha(a,b)\alpha\big(g^{-1}ab,g\big)\alpha\big(g,g^{-1}ab\big)^{-1}\rho\big(g^{-1}abg\big)\\
&=\alpha(a,b) (g\cdot \rho(ab) ).
\end{align*}
The above equalities are obtained using the 2-cocycle equation for $\alpha$.
Now, we show that this definition satisfies the axioms of an action. If
$\rho\colon A\rightarrow U(V_{\rho})$ is an $\alpha$-representation,
as $\alpha$ is a~normalized cocycle,
\begin{gather*}
1\cdot \rho(a)=\alpha(a,1)\alpha(1,a)^{-1}\rho(a)=\rho(a).
\end{gather*}
Moreover, given $g,h\in G$ and $a\in A$, we have
\begin{align*}
g\cdot(h\cdot \rho)(a)&=\alpha\big(g^{-1}a,g\big)\alpha\big(g,g^{-1}a\big)^{-1}(h\cdot
\rho)\big(g^{-1}ag\big)
\\
&=\alpha\big(g^{-1}a,g\big)\alpha\big(g,g^{-1}a\big)^{-1}\alpha\big(h^{-1}g^{-1}ag,h\big)
\alpha\big(h,h^{-1}g^{-1}ag\big)^{-1}\rho\big(h^{-1}g^{-1}agh\big)
\end{align*}
and
\begin{align*}
(gh)\cdot \rho(a)&=\alpha\big(h^{-1}g^{-1}a,gh\big)\alpha\big(gh,h^{-1}g^{-1}a\big)^{-1}\rho\big((gh)^{-1}a(gh)\big)
\\
&=\alpha\big(h^{-1}g^{-1}a,gh\big)\alpha\big(gh,h^{-1}g^{-1}a\big)^{-1}\rho\big(h^{-1}g^{-1}agh\big).
\end{align*}
Manipulating the cocycle equation for $\alpha$ it can be proved that
\begin{align*}
\alpha\big(g^{-1}a,g\big)&\alpha\big(g,g^{-1}a\big)^{-1}\alpha\big(h^{-1}g^{-1}ag,h\big)\alpha\big(h,h^{-1}g^{-1}ag\big)^{-1}
\\
&=\alpha\big(h^{-1}g^{-1}a,gh\big)\alpha\big(gh,h^{-1}g^{-1}a\big)^{-1}.
\end{align*}
This implies that for all $a\in A$
\begin{gather*}
g\cdot(h\cdot \rho)(a)=(gh)\cdot \rho(a).
\end{gather*}
Finally, for $a, b\in A$ expanding and using the cocycle equation we obtain
\begin{gather*}
b\cdot\rho(a)=\rho(b)^{-1}\rho(a)\rho(b)
\end{gather*}
and thus $b\cdot\rho\cong\rho$ as $\alpha$-representations.
\end{proof}

\begin{Rem}The action of $G$ on the set $\Irr^{\alpha}(A)$ given by equation~(\ref{def:act}) can be
described in an alternative way as follows. Suppose that $\alpha$ is a normalized $2$-cocycle of
$G$ with values in~$\S^{1}$ and let $\widetilde{G}_{\alpha}$ be the central extension corresponding
to the cocycle $\alpha$. If~$\rho\colon A\rightarrow U(V_{\rho})$ is an~$\alpha$-representation of $A$
then as explained before we have an associated representation \mbox{$\tilde{\rho}\colon \tilde{A}_{\alpha}\to U(V_{\rho})$}.
Given $(g,z)\in \widetilde{G}_{\alpha}$ we obtain the $\tilde{A}_{\alpha}$-representation
$(g,z)\cdot \tilde{\rho}$ defined by
\begin{gather*}
(g,z)\cdot \tilde{\rho}(a,w)=\tilde{\rho}\big((g,z)^{-1}(a,w)(g,z)\big).
\end{gather*}
This is a well-defined $\tilde{A}_{\alpha}$-representation such that the central factor
$\S^{1}$ acts by multiplication of scalars. Interpreting this $\tilde{A}_{\alpha}$-representation as an
$\alpha$-representation and using the cocycle identities we obtain equation~(\ref{def:act}). Explicitly,
for $g\in G$ and $a\in A$ we have
\begin{align*}
g\cdot \rho(a):\!&=(g,1)\cdot \tilde{\rho}(a,1)= \tilde{\rho}\big(g^{-1}ag,\alpha\big(g^{-1}a,g\big)\alpha\big(g,g^{-1}a\big)^{-1}\big)
\\
&=\alpha\big(g^{-1}a,g\big)\alpha\big(g,g^{-1}a\big)^{-1} \rho\big(g^{-1}ag\big).
\end{align*}
\end{Rem}


As above assume that $A$ is a normal subgroup of a group $G$ and $Q=G/A$. Fix an assignment
$\sigma \colon Q \rightarrow G$ such that $\pi(\sigma(q)) = q$ for all $q \in Q$ with $\sigma(1)
= 1$. We remark that the map $\sigma$ is only a set theoretical map so in particular it does
not necessarily have to be a group homomorphism.

Suppose that $\rho\colon A \rightarrow U(V_{\rho})$ is a complex irreducible $\alpha$-representation
with the property that $g\cdot \rho$ is isomorphic to $\rho$ for every $g\in G$
(under the action defined in equation~(\ref{def:act})). Under this assumption,
as $\sigma(q)\cdot \rho\cong\rho$
we can find an element $M_{q}\in
U(V_{\rho})$ for each $q \in Q$ such that
\begin{gather*}
	\sigma(q)\cdot \rho(a)=M_{q}^{-1}\rho(a)M_{q}.
\end{gather*}
This means that
\begin{gather*}
\alpha\big(\sigma(q)^{-1}a,\sigma(q)\big)\alpha\big(\sigma(q),\sigma(q)^{-1}a\big)^{-1}
\rho\big(\sigma(q)^{-1}a\sigma(q)\big)=M_{q}^{-1}\rho(a)M_{q}
\end{gather*}
for all $a\in A$. We can choose $M_{1} = 1$ as $\sigma(1)=1$ and $\sigma(1)\cdot\rho=\rho$.
Let $\widetilde{A}_{\alpha}$ be the central extension of $A$ by $\S^{1}$
associated to the cocycle $\alpha$ and $\tilde{\rho}\colon \widetilde{A}_{\alpha}\to U(V_{\rho})$ the
corresponding representation. Remember that $\tilde{\rho}(a,z)=z\rho(a)$ so we can define
\begin{gather*}
\sigma(q)\cdot \tilde{\rho}(a,z):=z (\sigma(q)\cdot \rho(a)).
\end{gather*}
Therefore
\begin{gather*}
\sigma(q)\cdot \tilde{\rho}(a,z)=z\big(M_{q}^{-1}\rho(a)M_{q}\big)
=M_{q}^{-1}z\rho(a)M_{q}=M_{q}^{-1}\tilde{\rho}(a,z)M_{q}.
\end{gather*}
Define $\chi\colon Q\times Q \rightarrow A$ by the equation
\begin{gather*}
\chi(q_{1},q_{2})=\sigma(q_{1}q_{2})^{-1}\sigma(q_{1})\sigma(q_{2})
\end{gather*}
Note that $\chi(q_{1},q_{2})$ belongs to $A$ since
$\pi(\chi(q_{1},q_{2}))=1$ and the map $\chi$ is normalized in
the sense that $\chi(q_{1},q_{2})=1$ whenever $q_{1}=1$ or
$q_{2}=1$. In addition, define $\tau\colon Q\times Q\rightarrow \S^{1}$ by
\begin{gather*}
\tau(q_{1},q_{2})=\alpha\big(\sigma(q_{1}q_{2})^{-1},\sigma(q_{1})\sigma(q_{2})\big)
\alpha\big(\sigma(q_{1}q_{2}),\sigma(q_{1}q_{2})^{-1}\big)^{-1}\alpha(\sigma(q_{1}),\sigma(q_{2}))\nonumber
\\ \hphantom{\tau(q_{1},q_{2})}
{}=\alpha\big(\sigma(q_{1}q_{2}),\chi(q_{1},q_{2})\big)^{-1}\alpha(\sigma(q_{1}),\sigma(q_{2})).
\end{gather*}
Now, for $q_{1}$, $q_{2}$ in $Q$ we notice that the element
\begin{gather*}
\tilde{\rho}(\chi(q_{1},q_{2}),\tau(q_{1},q_{2}))M_{q_{2}}^{-1}M_{q_{1}}^{-1}M_{q_{1}q_{2}}
=\tau(q_{1},q_{2})\rho(\chi(q_{1},q_{2}))M_{q_{2}}^{-1}M_{q_{1}}^{-1}
M_{q_{1}q_{2}}
\end{gather*}
belongs to the center $Z(U(V_{\rho}))\cong \S^{1}$. Define the map
$\beta_{\rho,\alpha}\colon Q\times Q \rightarrow
\S^{1}$ by the equation
\begin{gather}
\label{defQbeta}
\beta_{\rho,\alpha}(q_{1},q_{2}):
=\tilde{\rho}(\chi(q_{1},q_{2}),\tau(q_{1},q_{2}))M_{q_{2}}^{-1}M_{q_{1}}^{-1}M_{q_{1}q_{2}}.
\end{gather}

\begin{Lem}\label{twistedcocycle}
If $\rho\colon A \rightarrow U(V_{\rho})$ is a complex
irreducible $\alpha$-representation such that $g\cdot \rho\cong \rho$ for every
$g\in G$ then the map $\beta_{\rho,\alpha}\colon Q\times Q\rightarrow \S^{1}$ defines a
normalized $2$-cocycle on $Q$ with values in~$\S^{1}$.
\end{Lem}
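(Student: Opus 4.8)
\emph{Proof idea.}
The plan is to read off the $2$-cocycle identity for $\beta_{\rho,\alpha}$ from the associativity of composition in $U(V_{\rho})$, using the operators $M_{q}$ and the representation $\tilde{\rho}$ as bookkeeping; normalization will be immediate. Write $\widehat{\sigma}(q):=(\sigma(q),1)\in\widetilde{G}_{\alpha}$ and $c(q_{1},q_{2}):=\big(\chi(q_{1},q_{2}),\tau(q_{1},q_{2})\big)\in\widetilde{A}_{\alpha}$. I will use two facts: (i) by the defining relation for $M_{q}$ together with the description of the action of $G$ on $\Irr^{\alpha}(A)$ via conjugation in $\widetilde{G}_{\alpha}$, one has $M_{q}^{-1}\tilde{\rho}(a,z)M_{q}=\tilde{\rho}\big(\widehat{\sigma}(q)^{-1}(a,z)\widehat{\sigma}(q)\big)$ for every $(a,z)\in\widetilde{A}_{\alpha}$; and (ii) the second expression for $\tau$ in its definition says exactly that $\widehat{\sigma}(q_{1})\widehat{\sigma}(q_{2})=\widehat{\sigma}(q_{1}q_{2})\,c(q_{1},q_{2})$ in $\widetilde{G}_{\alpha}$.

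Combining (i) (applied twice) with (ii) shows that $M_{q_{1}}M_{q_{2}}$ and $M_{q_{1}q_{2}}\,\tilde{\rho}\big(c(q_{1},q_{2})\big)$ conjugate $\tilde{\rho}(a,z)$ in the same way for all $(a,z)\in\widetilde{A}_{\alpha}$; since $\rho$, hence $\tilde{\rho}$, is irreducible, Schur's lemma makes them differ by a scalar in $\S^{1}$ — this is exactly the assertion, made just before the lemma, that the element in~(\ref{defQbeta}) lies in $Z(U(V_{\rho}))$ — and rearranging~(\ref{defQbeta}) yields the key relation
\begin{gather*}
M_{q_{1}}M_{q_{2}}=\beta_{\rho,\alpha}(q_{1},q_{2})^{-1}\,M_{q_{1}q_{2}}\,\tilde{\rho}\big(c(q_{1},q_{2})\big).
\end{gather*}
Normalization is then clear: $\sigma(1)=1$ forces $\chi(1,q)=\chi(q,1)=1$, hence $\tau(1,q)=\tau(q,1)=1$ ($\alpha$ being normalized), and with $M_{1}=1$ and $\tilde{\rho}(1,1)=\mathrm{Id}$ one gets $\beta_{\rho,\alpha}(1,q)=\beta_{\rho,\alpha}(q,1)=1$.

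For the cocycle identity I would expand $(M_{q_{1}}M_{q_{2}})M_{q_{3}}$ and $M_{q_{1}}(M_{q_{2}}M_{q_{3}})$ using the key relation, pushing $\tilde{\rho}\big(c(q_{1},q_{2})\big)$ past $M_{q_{3}}$ via (i) and collapsing products $\tilde{\rho}(x)\tilde{\rho}(y)=\tilde{\rho}(xy)$ (legitimate since $\tilde{\rho}$ is a genuine representation of $\widetilde{A}_{\alpha}$). In the comparison the common factor $M_{q_{1}q_{2}q_{3}}$ cancels, and the two surviving $\tilde{\rho}(\cdots)$ factors coincide precisely by the ``factor set'' identity $c(q_{1}q_{2},q_{3})\,\widehat{\sigma}(q_{3})^{-1}c(q_{1},q_{2})\widehat{\sigma}(q_{3})=c(q_{1},q_{2}q_{3})\,c(q_{2},q_{3})$, which is just the associativity of $\widehat{\sigma}(q_{1})\widehat{\sigma}(q_{2})\widehat{\sigma}(q_{3})$ rewritten with (ii). What is left is $\beta_{\rho,\alpha}(q_{1}q_{2},q_{3})\beta_{\rho,\alpha}(q_{1},q_{2})=\beta_{\rho,\alpha}(q_{1},q_{2}q_{3})\beta_{\rho,\alpha}(q_{2},q_{3})$, the $2$-cocycle identity.

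All the real content is bookkeeping: facts (i), (ii) and the factor-set identity are routine manipulations of the cocycle equation for $\alpha$ and of the definitions of $\chi$ and $\tau$, after which the identity for $\beta_{\rho,\alpha}$ follows formally. A purely computational alternative is to substitute the defining formulas for $\chi$, $\tau$, $M_{q}$ and $\beta_{\rho,\alpha}$ directly into both sides of the desired identity and reduce using the $\alpha$-cocycle relation; that route is longer and uses no structural input. The main obstacle either way is keeping the $\S^{1}$-valued scalars straight when moving between $G$, $A$, $\widetilde{G}_{\alpha}$ and $\widetilde{A}_{\alpha}$ — concretely, in verifying fact (ii) and the factor-set identity.
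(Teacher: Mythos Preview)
Your proposal is correct and follows essentially the same approach as the paper: both rearrange~(\ref{defQbeta}) into the ``key relation'' and then compare the two ways of associating a triple product (the paper expands $M_{q_{1}q_{2}q_{3}}$ as $M_{(q_{1}q_{2})q_{3}}$ versus $M_{q_{1}(q_{2}q_{3})}$, while you expand $(M_{q_{1}}M_{q_{2}})M_{q_{3}}$ versus $M_{q_{1}}(M_{q_{2}}M_{q_{3}})$, which is the same manoeuvre). If anything you are more explicit than the paper about the two steps it leaves implicit---commuting $\tilde{\rho}\big(c(q_{1},q_{2})\big)$ past $M_{q_{3}}$ via the conjugation relation~(i), and matching the residual $\tilde{\rho}$ factors via the factor-set identity for~$c$---so your write-up actually closes a small expository gap in the original argument.
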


\begin{proof}
If either $q_{1}=1$ or $q_{2}=1$ we have that $\tau(q_{1},q_{2})=1$ as $\alpha$ is normalized.
Also, as $\chi$ is normalized we have $\chi(q_{1},q_{2})=1$. Since we are choosing
$\sigma(1)=1$ and $M_{1}=1$ it follows that $\beta_{\rho,\alpha}(q_{1},q_{2})=1$ if either
$q_{1}=1$ or $q_{2}=1$. Therefore either $\beta_{\rho,\alpha}$ is normalized. To finish we need
to prove that for every $q_{1},q_{2},q_{3}\in Q$ we have
\begin{gather*}
\beta_{\rho,\alpha}(q_{1},q_{2}q_{3})\beta_{\rho,\alpha}(q_{2},q_{3})
=\beta_{\rho,\alpha}(q_{1}q_{2},q_{3})\beta_{\rho,\alpha}(q_{1},q_{2}).
\end{gather*}
To see this note that as $\beta_{\rho,\alpha}(q_{1},q_{2})$ belongs to $\S^{1}$ we
have that
\begin{gather*}
M_{q_{1}q_{2}}
=\beta_{\rho,\alpha}(q_{1},q_{2})M_{q_{1}}M_{q_{2}}
\tilde{\rho}(\chi(q_{1},q_{2}),\tau(q_{1},q_{2}))^{-1}.
\end{gather*}
Therefore, for $q_1, q_2$ and $q_3$ in $Q$ we have,
\begin{align*}
M_{q_{1}q_{2}q_{3}}&=M_{q_{1}(q_{2}q_{3})}
=\beta_{\rho,\alpha}(q_{1},q_{2}q_{3})M_{q_{1}}
M_{q_{2}q_{3}}\tilde{\rho}(\chi(q_{1},q_{2}q_{3}),\tau(q_{1},q_{2}q_{3}))^{-1}
\\
&=\beta_{\rho,\alpha}(q_{1},q_{2}q_{3})M_{q_{1}}
\beta_{\rho,\alpha}(q_{2},q_{3})M_{q_{2}}M_{q_{3}}
\tilde{\rho}(\chi(q_{2},q_{3}),\tau(q_{2},q_{3}))^{-1}
\\
&\phantom{=}\times\tilde{\rho}(\chi(q_{1},q_{2}q_{3}),
\tau(q_{1},q_{2}q_{3}))^{-1}
=\beta_{\rho,\alpha}(q_{1},q_{2}q_{3})\beta_{\rho,\alpha}(q_{2},q_{3})M_{q_{1}}M_{q_{2}}M_{q_{3}}
\\
&\phantom{=}\times\big[\tilde{\rho}(\chi(q_{1},q_{2}q_{3}),\tau(q_{1},q_{2}q_{3}))
\tilde{\rho}(\chi(q_{2},q_{3}),\tau(q_{2},q_{3}))\big]^{-1}.
\end{align*}
In a similar way, writing $M_{q_{1}q_{2}q_{3}}=M_{(q_{1}q_{2})q_{3}}$ and expanding we obtain
\begin{align*}
M_{q_{1}q_{2}q_{3}}&=\beta_{\rho,\alpha}(q_{1}q_{2},q_{3})
\beta_{\rho,\alpha}(q_{1},q_{2})M_{q_{1}}M_{q_{2}}M_{q_{3}}
\\
&\phantom{=}\times\big[\tilde{\rho}(\chi(q_{1}q_{2},q_{3}),\tau(q_{1}q_{2},q_{3}))
\tilde{\rho}(\chi(q_{1},q_{2}),\tau(q_{1},q_{2}))\big]^{-1}.
\end{align*}	
This implies that
\begin{gather*}
\beta_{\rho,\alpha}(q_{1},q_{2}q_{3})\beta_{\rho,\alpha}(q_{2},q_{3})
=\beta_{\rho,\alpha}(q_{1}q_{2},q_{3})\beta_{\rho,\alpha}(q_{1},q_{2})
\end{gather*}
as we wanted to prove.
\end{proof}

%
%

\section[Decomposition of twisted equivariant K-theory]
{Decomposition of twisted equivariant $\boldsymbol K$-theory}\label{Section 3}

In this section we use the canonical decomposition of vector bundles to show that,
under some hypothesis, the $\alpha$-twisted equivariant $K$-theory $^{\alpha}K^{\ast}_{G}(X)$ of a
$G$-space $X$ can be decomposed as a direct sum of twisted equivariant K-theories parametrized
by the orbits of the action of $G$ on~$\Irr^{\alpha}(A)$ constructed on the previous section.

We start by recalling the definition of $\alpha$-twisted equivariant $K$-theory that we will use.
We~fol\-low the treatment used in~\cite[Section 7.2]{AdemRuan}.
Assume that $G$ is a finite group acting on a compact and Hausdorff space $X$.
Let $\alpha$ be a normalized 2-cocycle on $G$ with values in $\S^{1}$. Consider
\begin{gather*}
1\rightarrow \S^{1}\rightarrow \widetilde{G}_{\alpha}\rightarrow G\rightarrow 1
\end{gather*}
the corresponding central extension. Observe that the action of $G$ on $X$ can be
extended to an~action of $\widetilde{G}_{\alpha}$ in such a way that the central factor $\S^{1}$
acts trivially.

\begin{Def}\label{usualdeftwisted}
The $\alpha$-twisted $G$-equivariant $K$-theory of $X$, denoted by $^{\alpha}K_{G}^{0}(X)$,
is defined as the Grothendieck group of the set of isomorphism classes of
$\widetilde{G}_{\alpha}$-equivariant vector bundles over $X$ on which
$\S^{1}$ acts by multiplication of scalars on the fibers. For $n>0$ the twisted groups
$^{\alpha}K_{G}^{n}(X)$ are defined as
$^{\alpha}\tilde{K}_{G}^{0}(\Sigma^{n}X_{+})$, where as usual $X_{+}$
denotes the space $X$ with an added base point.
\end{Def}

To obtain the desired decomposition of twisted equivariant $K$-groups we are going to construct an equivalent
formulation for such twisted K-groups following the work in~\cite{GomezUribe}. For this suppose
that we have a short exact sequence of finite groups
\begin{gather*}
1\rightarrow A\rightarrow G\stackrel{\pi}{\rightarrow} Q\rightarrow 1.
\end{gather*}
Assume that $G$ acts on a compact and Hausdorff space $X$ in such a way that $A$ acts trivially.
Fix $\alpha\in Z^{2}\big(G,\S^{1}\big)$ a normalized 2-cocycle. Notice that by restriction we can
see any $\widetilde{G}_{\alpha}$-equivariant vector bundle over $X$ as an
$\widetilde{A}_{\alpha}$-equivariant vector bundle, where $\widetilde{A}_{\alpha}$ denotes the
central extension associated to the cocycle $\alpha$ seen as a cocycle defined on $A$.
Also, recall that associated to an~$\alpha$-representation $\rho\colon A \rightarrow U(V_{\rho})$ we have a representation
$\tilde{\rho}\colon \widetilde{A}_{\alpha} \rightarrow U(V_{\rho})$.

{\sloppy\begin{Def}
Suppose that $\rho\colon A \rightarrow U(V_{\rho})$ is a complex irreducible $\alpha$-representation.
A~$(\widetilde{G}_{\alpha}, \rho)$-equivariant vector bundle over $X$ is a
$\widetilde{G}_{\alpha}$-vector bundle on which the central factor $\S^{1}$ acts
by multiplication of scalars on $E$ and the map
\begin{align*}
\gamma\colon\ \V_{\rho}\otimes \Hom_{\widetilde{A}_{\alpha}}(\V_{\rho},E)&\rightarrow E,
\\
v\otimes f &\mapsto f(v)
\end{align*}
is an isomorphism of $\widetilde{A}_{\alpha}$-vector bundles on which $\S^{1}$ acts
by multiplication of scalars.
\end{Def}

}

In the above definition, if $\rho$ is an $\alpha$-representation of $A$ then $\V_{\rho}$
denotes the trivial $\widetilde{A}_{\alpha}$-vector bundle $\pi_{1}\colon X\times V_{\rho}\to X$.
Observe that if $p\colon E\to X$ is a $\widetilde{G}_{\alpha}$-vector bundle on which the central
factor $\S^{1}$ acts by multiplication then, as we are assuming that $A$ acts trivially
on $X$, it follows that for every $x \in X$ the fiber $E_{x}$ can be seen as a representation of
$\widetilde{A}_{\alpha}$ on which the central factor acts by scalar multiplication. Thus for every
$x \in X$ the fiber $E_{x}$ can be seen as an $\alpha$-representation of $A$.
With this point of view, a $\big(\widetilde{G}_{\alpha}, \rho\big)$-equivariant vector bundle is a
$\widetilde{G}_\alpha$-equivariant vector bundle $p\colon E\rightarrow X$ such that for every
$x \in X$ the fiber $E_{x}$ is an~$\alpha$-representation of $A$ isomorphic to a direct sum of
the $\alpha$-representation $\rho$. Let $\text{Vect}_{\widetilde{G}_{\alpha}, \rho}(X)$
denote the set of isomorphism classes of $\big(\widetilde{G}_{\alpha}, \rho\big)$-equivariant vector
bundles, where two $\big(\widetilde{G}_{\alpha}, \rho\big)$-equivariant vector bundles are isomorphic
if they are isomorphic as $\widetilde{G}_{\alpha}$-vector bundles. Notice that if
$E_{1}$ and $E_{2}$ are two $\big(\widetilde{G}_{\alpha}, \rho\big)$-equivariant vector bundles
then so is $E_{1}\oplus E_{2}$. Therefore $\text{Vect}_{\widetilde{G}_{\alpha}, \rho}(X)$
is a semigroup. Following~\cite[Definition~2.2]{GomezUribe} we have the next definition.

\begin{Def}\label{def,grhoalfa}
Assume that $G$ acts on a compact space $X$ in such a way that $A$ acts trivially on $X$
and let $\alpha$ be a normalized 2-cocycle $\alpha\in Z^{2}\big(G,\S^{1}\big)$. We define
$K^{0}_{\widetilde{G}_{\alpha}, \rho}(X)$, the $\big(\widetilde{G}_{\alpha}, \rho\big)$-equivariant
$K$-theory of $X$, as the Grothendieck construction applied to
$\text{Vect}_{\widetilde{G}_{\alpha}, \rho}(X)$. For $n>0$ the group
$K^{n}_{\widetilde{G}_{\alpha}, \rho}(X)$ is defined as
$\tilde{K}^{0}_{\widetilde{G}_{\alpha}, \rho}(\Sigma^{n}X_{+})$.
\end{Def}

As our next step we show that the previous definition can be described using the usual definition
of twisted equivariant $K$-theory provided in Definition~\ref{usualdeftwisted}.
For this suppose that $\alpha$ is a normalized 2-cocycle of $G$ with values in $\S^{1}$.
As above assume that $A$ is a normal subgroup of $G$ and let $Q=G/A$. Let $\rho$ be an
irreducible $\alpha$-representation such that $g\cdot\rho\cong\rho$ for all $g \in G$. Fix
a set theoretical section $\sigma\colon Q\to G$ such that $\sigma(1)=1$ as in the previous section.
We can extend $\sigma$ to obtain a map $\tilde{\sigma}\colon Q\to \widetilde{G}_{\alpha}$ by defining
$\tilde{\sigma}(q)=(\sigma(q),1)\in \widetilde{G}_{\alpha}$. Let $\beta_{\rho,\alpha}$ be the
$2$-cocycle defined on $Q$ with values in $\S^{1}$ constructed
in equation~(\ref{defQbeta}). With this cocycle we can consider the central extension
\begin{gather*}
1\rightarrow \S^{1}\rightarrow \widetilde{Q}_{\beta_{\rho,\alpha}}
\rightarrow Q\rightarrow 1.
\end{gather*}
With this in mind we have the following generalization of~\cite[Theorem 2.1]{GomezUribe}.

\begin{Th}\label{teo,bij}
Suppose that $\rho$ is an irreducible $\alpha$-representation such that $g\cdot\rho\cong\rho$
for all $g \in G$. Let $X$ be a $G$-space such that $A$ acts trivially on $X$.
If $p\colon E\rightarrow X$ is a $\big(\widetilde{G}_{\alpha}, \rho\big)$-equivariant vector bundle, then
$\Hom_{\widetilde{A}_{\alpha}}(\V_{\rho},E)$ has the
structure of a $\widetilde{Q}_{\beta_{\rho,\alpha}}$-vector bundle on which
the central factor~$\S^{1}$ acts by multiplication of
scalars. Moreover, the assignment
\begin{gather*}
[E]\rightarrow
[\Hom_{\widetilde{A}_{\alpha}}(\V_{\rho},E)]
\end{gather*}
is a natural one to one correspondence between isomorphism
classes of $\big(\widetilde{G}_{\alpha},\rho\big)$-equivariant
vector bundles over $X$ and isomorphism classes of
$\widetilde{Q}_{\beta_{\rho,\alpha}}$-equivariant vector bundles over $X$
for which the central factor $\S^{1}$ acts by multiplication of scalars.
\end{Th}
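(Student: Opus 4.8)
The plan is to mimic the proof of \cite[Theorem~2.1]{GomezUribe}: I would produce explicit constructions in both directions, $E\mapsto \Hom_{\widetilde{A}_{\alpha}}(\V_{\rho},E)$ and $W\mapsto \V_{\rho}\otimes W$, the first turning a $\big(\widetilde{G}_{\alpha},\rho\big)$-equivariant bundle into a $\widetilde{Q}_{\beta_{\rho,\alpha}}$-equivariant bundle on which $\S^{1}$ acts by scalars and the second going back, and then check that they are mutually inverse on isomorphism classes. The $2$-cocycle $\beta_{\rho,\alpha}$ was introduced in~(\ref{defQbeta}) precisely so that the relevant formulas define honest group actions, and carrying this out is the main point of the argument.

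\emph{The $\widetilde{Q}_{\beta_{\rho,\alpha}}$-bundle structure.} Since $A$ (hence $\widetilde{A}_{\alpha}$) acts trivially on $X$, each fiber $E_{x}$ is an $\alpha$-representation of $A$, and by the defining property of a $\big(\widetilde{G}_{\alpha},\rho\big)$-equivariant bundle it is a direct sum of copies of $\rho$; thus $F:=\Hom_{\widetilde{A}_{\alpha}}(\V_{\rho},E)$ is the $\rho$-isotypical multiplicity bundle of $E$, a vector bundle over $X$ by the usual construction for a compact group acting on a bundle over a space with trivial action. For $(q,z)\in\widetilde{Q}_{\beta_{\rho,\alpha}}$ and $f\in F_{x}$ I set
\[
(q,z)\cdot f:=z\,\tilde{\sigma}(q)\circ f\circ M_{q}^{-1}\colon\ V_{\rho}\longrightarrow E_{\sigma(q)\cdot x}=E_{q\cdot x},
\]
using the $\widetilde{G}_{\alpha}$-action on $E$ and the intertwiner $M_{q}$. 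Because $M_{q}$ relates $\rho$ to $\sigma(q)\cdot\rho$ as in~(\ref{def:act}) and $f$ is $\widetilde{A}_{\alpha}$-equivariant, the cocycle identity for $\alpha$, in the form $\alpha(a,\sigma(q))\alpha(\sigma(q),\sigma(q)^{-1}a)=\alpha(\sigma(q),\sigma(q)^{-1}a\sigma(q))\alpha(\sigma(q)^{-1}a,\sigma(q))$, shows that $(q,z)\cdot f$ is again $\widetilde{A}_{\alpha}$-equivariant, hence lies in $F_{q\cdot x}$; the central $\S^{1}$ evidently acts by scalars. That this defines an action of $\widetilde{Q}_{\beta_{\rho,\alpha}}$ follows from the identity $\tilde{\sigma}(q_{1})\tilde{\sigma}(q_{2})=\tilde{\sigma}(q_{1}q_{2})\cdot\big(\chi(q_{1},q_{2}),\tau(q_{1},q_{2})\big)$ in $\widetilde{G}_{\alpha}$, which is immediate from the definitions of $\chi$ and $\tau$, together with the relation
\[
M_{q_{1}q_{2}}=\beta_{\rho,\alpha}(q_{1},q_{2})\,M_{q_{1}}M_{q_{2}}\,\tilde{\rho}\big(\chi(q_{1},q_{2}),\tau(q_{1},q_{2})\big)^{-1}
\]
established inside the proof of Lemma~\ref{twistedcocycle}: a short computation then gives $(q_{1},z_{1})\cdot\big((q_{2},z_{2})\cdot f\big)=\big((q_{1},z_{1})(q_{2},z_{2})\big)\cdot f$. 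Postcomposition makes $E\mapsto F$ functorial, compatible with direct sums and with pullback along $G$-maps, so $[E]\mapsto[F]$ is a well-defined, natural, additive assignment.

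\emph{The inverse.} Given a $\widetilde{Q}_{\beta_{\rho,\alpha}}$-bundle $W\to X$ on which $\S^{1}$ acts by scalars, I put on $\V_{\rho}\otimes W$ the $\widetilde{G}_{\alpha}$-action
\[
(g,z)\cdot(v\otimes w):=z\,\alpha\big(g\sigma(q)^{-1},\sigma(q)\big)^{-1}\big(\rho\big(g\sigma(q)^{-1}\big)M_{q}v\big)\otimes\big((q,1)\cdot w\big),\qquad q:=\pi(g),
\]
where $(q,1)\in\widetilde{Q}_{\beta_{\rho,\alpha}}$ acts on $W$; the $\alpha$-correction factor is forced by the requirement that the map $\gamma$ below be equivariant. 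The central $\S^{1}$ acts by scalars, and for $(a,w_{0})\in\widetilde{A}_{\alpha}$ the formula reduces to $\tilde{\rho}(a,w_{0})v\otimes w$, so that $\V_{\rho}\otimes W$ will be a $\big(\widetilde{G}_{\alpha},\rho\big)$-equivariant bundle once we know the formula is an action; the verification of the latter is a computation parallel to the one above, again using the cocycle identities for $\alpha$ and the relation between $M_{q_{1}q_{2}}$ and $M_{q_{1}}M_{q_{2}}$, with the two occurrences of $\beta_{\rho,\alpha}(q_{1},q_{2})$ cancelling. Finally, Schur's lemma gives a $\widetilde{Q}_{\beta_{\rho,\alpha}}$-equivariant isomorphism $\Hom_{\widetilde{A}_{\alpha}}(\V_{\rho},\V_{\rho}\otimes W)\cong W$ via $w\mapsto(v\mapsto v\otimes w)$, while the canonical map $\gamma\colon\V_{\rho}\otimes\Hom_{\widetilde{A}_{\alpha}}(\V_{\rho},E)\to E$, $v\otimes f\mapsto f(v)$, is an isomorphism of $\widetilde{A}_{\alpha}$-bundles by the definition of a $\big(\widetilde{G}_{\alpha},\rho\big)$-equivariant bundle and, comparing the two $\widetilde{G}_{\alpha}$-actions, is in fact $\widetilde{G}_{\alpha}$-equivariant. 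Hence the two constructions are mutually inverse bijections on isomorphism classes, which is the assertion.

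\emph{Main obstacle.} The heart of the matter is verifying that the two displayed formulas really are group actions: every scalar that appears when one rewrites $\sigma(q_{1})\sigma(q_{2})=\sigma(q_{1}q_{2})\chi(q_{1},q_{2})$, pushes the intertwiners $M_{q}$ past one another, and accounts for the non-multiplicativity of the section $\tilde{\sigma}$ must collapse exactly to $\beta_{\rho,\alpha}(q_{1},q_{2})$. This is precisely what the definition~(\ref{defQbeta}) of $\beta_{\rho,\alpha}$ (via $\chi$ and $\tau$) was engineered to guarantee, so the computation, though somewhat lengthy, is completely forced.
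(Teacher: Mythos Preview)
Your proposal is correct and follows essentially the same approach as the paper's own proof: you define the $\widetilde{Q}_{\beta_{\rho,\alpha}}$-action on $\Hom_{\widetilde{A}_{\alpha}}(\V_{\rho},E)$ by exactly the same formula $(q,z)\cdot f=z\,\tilde{\sigma}(q)\circ f\circ M_{q}^{-1}$, invoke the relation $M_{q_{1}q_{2}}=\beta_{\rho,\alpha}(q_{1},q_{2})M_{q_{1}}M_{q_{2}}\tilde{\rho}(\chi,\tau)^{-1}$ from Lemma~\ref{twistedcocycle} to check it is an action, build the inverse by tensoring with $\V_{\rho}$, and close the loop with the $\widetilde{G}_{\alpha}$-equivariance of $\gamma$ together with Schur's lemma. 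The only cosmetic difference is that the paper writes the inverse action as $(g,z)\cdot(v\otimes f)=M_{\pi(g)}\tilde{\rho}\big(\tilde{\sigma}(\pi(g))^{-1}(g,z)\big)v\otimes\big((\pi(g),1)\cdot f\big)$, placing $M_{q}$ on the left and using $\sigma(q)^{-1}g$ rather than $g\sigma(q)^{-1}$; via the intertwining relation $\rho(a)M_{q}=M_{q}\,(\sigma(q)\cdot\rho)(a)$ and the cocycle identities this is equivalent to your formula, so no substantive divergence.
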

\begin{proof} We are only going to provide a sketch of the proof as it follows the same
steps used in the proof of~\cite[Theorem 2.1]{GomezUribe}.

Suppose that $p\colon E \rightarrow X$ is a $\widetilde{G}_{\alpha}$-vector bundle. We give
$\Hom_{\widetilde{A}_{\alpha}}(\V_{\rho},E)$ an action of
	$\widetilde{Q}_{\beta_{\rho,\alpha}}$. Given $f \in
\Hom_{\widetilde{A}_{\alpha}}(\V_{\rho},E)_{x}$ and
$q\in Q$ we define $q \bullet f \in
\Hom_{\widetilde{A}_{\alpha}}(\V_{\rho},E)_{q\cdot x}$ by
\begin{gather*}
(q\bullet f)(v)=\tilde{\sigma}(q)\cdot f\big(M_{q}^{-1}v\big)
=(\sigma(q),1) \cdot f\big(M_{q}^{-1}v\big),
\end{gather*}
where $M_{q} \in U(V_{\rho})$ is the element chosen in the Section~\ref{section cocycles}.
It is easy to see that $q \bullet f$ is $\widetilde{A}_{\alpha}$-equivariant. Also,
if $q_{1}, q_{2}\in Q$ then $q_{1}\bullet
(q_{2} \bullet f)$ is such that for $v \in V_{\rho}$
\begin{align*}
q_{1}\bullet(q_{2}\bullet f)(v)&=\tilde{\sigma}(q_{1})
(q_{2}\bullet f)\big(M_{q_{1}}^{-1}v\big)=\tilde{\sigma}(q_{1})\tilde{\sigma}(q_{2})
f\big(M_{q_{2}}^{-1}M_{q_{1}}^{-1}v\big)\\
&=(\sigma(q_{1}q_{2}),1)(\chi(q_{1},q_{2}),\tau(q_{1},q_{2}))
f\big(M_{q_{2}}^{-1}M_{q_{1}}^{-1}v\big)\\
&=\tilde{\sigma}(q_{1}q_{2})f(\tilde{\rho}\big((\chi(q_{1},q_{2}),\tau(q_{1},q_{2}))
M_{q_{2}}^{-1}M_{q_{1}}^{-1}v\big)
\\
&=\tilde{\sigma}(q_{1}q_{2})
f\big(\beta_{\rho,\alpha}(q_{1},q_{2})M_{q_{1}q_{2}}^{-1}v\big)=\beta_{\rho,\alpha}(q_{1},q_{2}) \tilde{\sigma}(q_{1}q_{2})
f\big(M_{q_{1}q_{2}}^{-1}v\big)\\
&=\beta_{\rho,\alpha}(q_{1},q_{2})(q_{1}q_{2}\bullet f(v)).
\end{align*}
We conclude that
\begin{gather*}
q_{1}\bullet(q_{2}\bullet
f)(v)=\beta_{\rho,\alpha}(q_{1},q_{2})(q_{1}q_{2}\bullet f(v)).
\end{gather*}
The last equation allows us to define an action of
$\widetilde{Q}_{\beta_{\rho,\alpha}}$ on
$\Hom_{\widetilde{A}_{\alpha}}(\V_{\rho},E)$ as follows. If
$(q, z) \in Q_{\beta_{\rho,\alpha}}$ and $f \in
\Hom_{\widetilde{A}_{\alpha}}(\V_{\rho},E)_{x}$ define
\begin{gather*}
(q,z)\cdot f:=z(q\bullet f).
\end{gather*}
Thus if $v\in V_{\rho}$ then
\begin{gather*}
((q,z)\cdot f)(v)=z\big(\tilde{\sigma}(q)\cdot f\big(M_{q}^{-1}v\big)\big)=
\tilde{\sigma}(q)\cdot \big(zf\big(M_{q}^{-1}v\big)\big).
\end{gather*}
Unraveling the definitions, it is easy to see that this way
$\Hom_{\widetilde{A}_{\alpha}}(\V_{\rho},E)$ has the structure of
a~$\widetilde{Q}_{\beta_{\rho,\alpha}}$-equivariant vector bundle such that
the central factor $\S^{1}$ acts by multiplication of scalars.

Suppose now that $p \colon F \rightarrow X$ is a
$\widetilde{Q}_{\beta_{\rho,\alpha}}$-equivariant vector bundle over $X$
for which the central~$\S^{1}$ acts by multiplication of
scalars. Given $(g,z)\in \widetilde{G}_{\alpha}$, $f\in F_{x}$ and $v\in
V_{\rho}$ define
\begin{gather}
(g,z)\cdot (v\otimes f):=M_{\pi(g)}\tilde{\rho}\big(\tilde{\sigma}(\pi(g))^{-1}(g,z)\big)v\otimes
((\pi(g),1)\cdot f)\in (\V_{\rho}\otimes F)_{\pi(g)\cdot x}.
\label{ecu:ac2}
\end{gather}
The above assignment defines an action of $\widetilde{G}_{\alpha}$
on $\V_{\rho}\otimes F$ and $\V_{\rho}\otimes F$
becomes a $\widetilde{G}_{\alpha}$-vector bundle.
Moreover for $(a,z)\in \widetilde{A}_{\alpha}$, as $M_{1}=1$, we have
\begin{gather*}
(a,z)\cdot (v\otimes
f)=M_{1}\tilde{\rho}\big(\tilde{\sigma}(1)^{-1}(a,z)\big)v\otimes
(1,1)\cdot f=\tilde{\rho}(a,z)v\otimes f
\end{gather*}
so that $\widetilde{A}_{\alpha}$ acts on $\V_{\rho}\otimes
F$ by the representation $\tilde{\rho}$; that is,
$p\colon \V_{\rho}\otimes F\rightarrow X$ is a
$\big(\widetilde{G}_{\alpha},\rho\big)$-equivariant vector bundle
over~$X$.
	
Finally, assume that $p\colon E \rightarrow X$ is a
$\big(\widetilde{G}_{\alpha},\rho\big)$-equivariant vector bundle over $X$.
By definition the map
\begin{align*}
\gamma\colon \ \V_{\rho}\otimes
\Hom_{\widetilde{A}_{\alpha}}(\V_{\rho},E)&\rightarrow E,
\\
(v,f)&\mapsto f(v)
\end{align*}
is an isomorphism of $\widetilde{A}_{\alpha}$-vector bundles. We may endow
$\V_{\rho}\otimes\Hom_{\widetilde{A}_{\alpha}}(\V_{\rho},E)$
with structure of a~$\widetilde{G}_{\alpha}$-vector bundle. That the map
$\gamma$ is an isomorphism of vector bundles and its
$\widetilde{G}_{\alpha}$-equivariance follows from next equations.
For $(g,z)\in \widetilde{G}_{\alpha}$ we have
\begin{align*}
\gamma((g,z)\cdot(v\otimes
f))&=\gamma\big(M_{\pi(g)}\tilde{\rho}\big(\tilde{\sigma}(\pi(g))^{-1}(g,z)\big)v\big)\otimes
(\pi(g),1)\cdot f\\
&=(\pi(g),1)\cdot f
\big(M_{\pi(g)}\tilde{\rho}\big(\tilde{\sigma}(\pi(g))^{-1}(g,z)\big)v\big)\\
&=\pi(g)\bullet\!
f\big(M_{\pi(g)}\tilde{\rho}\big(\tilde{\sigma}(\pi(g))^{-1}(g,z)\big)v\big)
\!=\tilde{\sigma}(\pi(g))\!\cdot
f\big(\tilde{\rho}\big(\tilde{\sigma}(\pi(g))^{-1}(g,z)\big)v\big)\\
	&=(g,z)\cdot f(v)=(g,z)\cdot\gamma(v\otimes f).
\end{align*}
The previous argument shows that $\gamma\colon
\V_{\rho}\otimes
\Hom_{\widetilde{A}_{\alpha}}(\V_{\rho},E)\rightarrow
E$ is an isomorphism of $\widetilde{G}_{\alpha}$-vector bundles.
	
Now, if $p\colon F\rightarrow X$ is a
$\widetilde{Q}_{\beta_{\rho,\alpha}}$-equivariant vector bundle over $X$
for which the central $\S^{1}$ acts by multiplication of
scalars, then by equation~(\ref{ecu:ac2}) we know that
$\V_{\rho}\otimes F$ is a
$\big(\widetilde{G}_{\alpha},\rho\big)$-equivariant vector bundle.
The canonical isomorphism of vector bundles
\begin{align*}
F&\rightarrow \Hom_{\widetilde{A}_{\alpha}}(
\V_{\rho}, \V_{\rho}\otimes F),
\\
x&\mapsto f_{x}\colon\ v\mapsto v\otimes x
\end{align*}
is in fact $\widetilde{Q}_{\beta_{\rho,\alpha}}$-equivariant, and therefore the vector
bundles $F$ and
$\Hom_{\widetilde{A}_{\alpha}}(\V_{\rho},\V_{\rho}\otimes
F)$ are isomorphic as
$\widetilde{Q}_{\beta_{\rho,\alpha}}$-equivariant vector bundles.
	
We conclude that the inverse map of the assignment
$[E]\mapsto[\Hom_{\widetilde{A}_{\alpha}}(\V_{\rho},E)]$
is precisely the map defined by the assignment $[F]\mapsto
[\V_{\rho}\otimes F]$.
\end{proof}


As an immediate corollary of Theorem~\ref{teo,bij} we obtain the following
identification of the $(\widetilde{G}_{\alpha}, \rho)$-equivariant $K$-theory groups of
Definition~\ref{def,grhoalfa} with the $\beta_{\rho,\alpha}$-twisted $Q$-equivariant
$K$-theory groups provided in Definition~\ref{usualdeftwisted}.

\begin{Cor}\label{identification}
Let $X$ be a $G$-space such that $A$ acts trivially on $X$. Assume that
$\rho$ is an~$\alpha$-re\-p\-resentation of $A$ such that
$g\cdot \rho\cong \rho$ for every $g\in G$. Then the assignment
\begin{align*}
K^{\ast}_{\widetilde{G}_{\alpha}, \rho}(X)
&\stackrel{\cong}{\rightarrow} \ ^{\beta_{\rho,\alpha}}K^{\ast}_{Q}(X),
\\
[E]&\mapsto [\Hom_{\widetilde{A}_{\alpha}}(\V_{\rho},E)]
\end{align*}
defines a natural isomorphism.
\end{Cor}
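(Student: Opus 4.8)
The plan is to deduce the corollary directly from Theorem~\ref{teo,bij} together with the identification of $\widetilde{Q}_{\beta_{\rho,\alpha}}$-equivariant vector bundles (on which $\S^{1}$ acts by scalars) with the data defining $^{\beta_{\rho,\alpha}}K^{\ast}_{Q}(X)$. First I would recall, from Definition~\ref{def,grhoalfa}, that $K^{0}_{\widetilde{G}_{\alpha},\rho}(X)$ is by construction the Grothendieck group of the semigroup $\text{Vect}_{\widetilde{G}_{\alpha},\rho}(X)$, and that $^{\beta_{\rho,\alpha}}K^{0}_{Q}(X)$ is, by Definition~\ref{usualdeftwisted} applied to $Q$ with the cocycle $\beta_{\rho,\alpha}$, the Grothendieck group of isomorphism classes of $\widetilde{Q}_{\beta_{\rho,\alpha}}$-equivariant vector bundles over $X$ on which the central $\S^{1}$ acts by multiplication of scalars.

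Next I would invoke Theorem~\ref{teo,bij}: the assignment $[E]\mapsto[\Hom_{\widetilde{A}_{\alpha}}(\V_{\rho},E)]$ is a bijection between the two sets of isomorphism classes, with inverse $[F]\mapsto[\V_{\rho}\otimes F]$. The key point is that this bijection is additive: for $(\widetilde{G}_{\alpha},\rho)$-equivariant bundles $E_{1}$, $E_{2}$ there is a natural isomorphism $\Hom_{\widetilde{A}_{\alpha}}(\V_{\rho},E_{1}\oplus E_{2})\cong\Hom_{\widetilde{A}_{\alpha}}(\V_{\rho},E_{1})\oplus\Hom_{\widetilde{A}_{\alpha}}(\V_{\rho},E_{2})$ of $\widetilde{Q}_{\beta_{\rho,\alpha}}$-bundles, since $\Hom$ out of a fixed object commutes with finite direct sums and the constructed $\widetilde{Q}_{\beta_{\rho,\alpha}}$-action is defined fibrewise and respects this splitting. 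Hence the bijection is an isomorphism of semigroups, and by the universal property of the Grothendieck construction it induces a group isomorphism $K^{0}_{\widetilde{G}_{\alpha},\rho}(X)\xrightarrow{\cong}\ ^{\beta_{\rho,\alpha}}K^{0}_{Q}(X)$.

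For the higher groups $n>0$, both sides are defined via the reduced theory applied to $\Sigma^{n}X_{+}$: by Definition~\ref{def,grhoalfa}, $K^{n}_{\widetilde{G}_{\alpha},\rho}(X)=\tilde{K}^{0}_{\widetilde{G}_{\alpha},\rho}(\Sigma^{n}X_{+})$, and by Definition~\ref{usualdeftwisted}, $^{\beta_{\rho,\alpha}}K^{n}_{Q}(X)=\ ^{\beta_{\rho,\alpha}}\tilde{K}^{0}_{Q}(\Sigma^{n}X_{+})$. Since $A$ still acts trivially on $\Sigma^{n}X_{+}$ (the suspension coordinates and the added basepoint carry trivial actions), the degree-zero isomorphism just established applies to $\Sigma^{n}X_{+}$, and restricting to the reduced part (the summand split off by the basepoint, which the isomorphism respects by naturality) yields the isomorphism in degree $n$. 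Naturality in $X$ follows because the construction $[E]\mapsto[\Hom_{\widetilde{A}_{\alpha}}(\V_{\rho},E)]$ commutes with pullback along $G$-equivariant maps: for $h\colon X'\to X$ one has a canonical identification $\Hom_{\widetilde{A}_{\alpha}}(\V_{\rho},h^{*}E)\cong h^{*}\Hom_{\widetilde{A}_{\alpha}}(\V_{\rho},E)$ compatible with the $\widetilde{Q}_{\beta_{\rho,\alpha}}$-actions, which is immediate from the fibrewise definitions.

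The only real work here is bookkeeping rather than a genuine obstacle: one must check that the bijection of Theorem~\ref{teo,bij} is compatible with direct sums and with the reduced/relative constructions and with pullbacks, but all of these are formal consequences of the fibrewise nature of the $\Hom_{\widetilde{A}_{\alpha}}(\V_{\rho},-)$ functor and the explicit $\widetilde{Q}_{\beta_{\rho,\alpha}}$-action built in the proof of that theorem. I would therefore present the corollary's proof as a short argument: cite Theorem~\ref{teo,bij} for the bijection on isomorphism classes, note additivity and naturality, and pass to Grothendieck groups and then to higher degrees via suspension.
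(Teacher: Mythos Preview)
Your proposal is correct and matches the paper's approach: the paper states this result as an ``immediate corollary'' of Theorem~\ref{teo,bij} without further proof, and your argument spells out exactly the routine steps (additivity, passage to Grothendieck groups, suspension for higher degrees, naturality under pullback) that justify the word ``immediate''.
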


Suppose now that $\alpha$ is a normalized 2-cocycle on $G$ with values in $\S^{1}$.
Consider the action of $G$ on $\Irr^{\alpha}(A)$ constructed in Section~\ref{section cocycles}.
Given $[\tau]\in \Irr^{\alpha}(A)$ let $G_{[\tau]}=\{g\in G \mid g\cdot \tau\cong \tau\}$ denote
the isotropy subgroup of the action of $G$ at $[\tau]$. The group $G_{[\tau]}$ fits into
the short exact sequence
\begin{gather*}
1\rightarrow A\rightarrow G_{[\tau]}\stackrel{\pi}{\rightarrow} Q_{[\tau]}\rightarrow 1
\end{gather*}
and $Q_{[\tau]}=G_{[\tau]}/A$ agrees with the isotropy of the group $Q$ at $[\tau]$. Let
$\big(\widetilde{G}_{[\tau]}\big)_{\alpha}$ be the central extension corresponding to the cocycle
$\alpha$ seen as a cocycle defined on $G_{[\tau]}$. Therefore we have the following commutative
diagram of central extensions
\begin{equation*}
\begin{CD}
1 @>>> \S^{1}@>>>\big(\widetilde{G}_{[\tau]}\big)_{\alpha} @>>> G_{[\tau]} @>>> 1\\
@. @V{\text{id}}VV @VVV @VVV @. \\
1 @>>> \S^{1}@>>>\widetilde{G}_{\alpha} @>>> G @>>> 1.
\end{CD}
\end{equation*}
Assume that $X$ is a compact and Hausdorff $G$-space on which $A$ acts trivially.
As before we can extend the action of $G$ on $X$ to an action of $\widetilde{G}_{\alpha}$
on $X$ in such a way that $\S^{1}$ acts trivially. Let $p\colon E\to X$ be a
$\widetilde{G}_{\alpha}$-equivariant vector bundle on which $\S^{1}$ acts by scalar
multiplication on the fibers. As $\widetilde{A}_{\alpha}$ acts trivially on
$X$ each fiber of $E$ can be seen as an $\alpha$-representation of $A$. Using
fiberwise the canonical decomposition theorem for $\alpha$-representations
(Theorem~\ref{can,decom}) we see that the assignment
\begin{align*}
\gamma\colon\ \bigoplus_{[\tau]\in \Irr^{\alpha}(A)}\V_{\tau}\otimes
\Hom_{\widetilde{A}_{\alpha}}(\V_{\tau},E)&\rightarrow E,
\\
v\otimes f&\mapsto f(v)
\end{align*}
defines an isomorphism of $\widetilde{A}_{\alpha}$-equivariant vector bundles.
Using this decomposition we obtain the following theorem.

%
%
\begin{Th}\label{ThD}
Under the above assumptions there is a natural isomorphism
\begin{align*}
\Psi_{X}\colon\ \phantom{i}^{\alpha}K_{G}^{\ast}(X)&\rightarrow
\bigoplus_{[\tau]\in G\backslash \Irr^{\alpha}(A)}
K^{\ast}_{(\widetilde{G}_{[\tau]})_{\alpha},\tau}(X),
\\
[E]&\mapsto \bigoplus_{[\tau]\in G\backslash \Irr^{\alpha}(A)}
\big[\V_{\tau}\otimes\Hom_{\widetilde{A}_{\alpha}}(\V_{\tau},E)\big].
\end{align*}
This isomorphism is
functorial on maps $X\rightarrow Y$ of $G$-spaces on which $A$ acts trivially.
\end{Th}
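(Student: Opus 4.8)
The plan is to build the isomorphism $\Psi_X$ fiberwise using the canonical decomposition theorem, and to check equivariance and naturality orbit-by-orbit. First I would fix a set of representatives $[\tau_1],\dots,[\tau_r]$ for the orbits $G\backslash\Irr^{\alpha}(A)$, and observe that for a $\widetilde{G}_{\alpha}$-equivariant bundle $E$ over $X$ (with $\S^1$ acting by scalars) the fiberwise canonical decomposition
\begin{gather*}
\gamma\colon\ \bigoplus_{[\tau]\in\Irr^{\alpha}(A)}\V_{\tau}\otimes\Hom_{\widetilde{A}_{\alpha}}(\V_{\tau},E)\to E
\end{gather*}
is an isomorphism of $\widetilde{A}_{\alpha}$-bundles, as already noted before the statement. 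The key point is that the full group $\widetilde{G}_{\alpha}$ acts on the left-hand side, permuting the summands: given $(g,z)\in\widetilde{G}_{\alpha}$, the fiber $E_{x}$ maps to $E_{g\cdot x}$, and since for $a\in A$ one has, by the very definition of the $G$-action on $\Irr^{\alpha}(A)$ in equation~(\ref{def:act}), that the $\tau$-isotypical part of $E_x$ is carried to the $(g\cdot\tau)$-isotypical part of $E_{g\cdot x}$, the summand indexed by $[\tau]$ is sent to the summand indexed by $[g\cdot\tau]$. Hence $\bigoplus_{[\tau]\in[\tau_i]}\V_{\tau}\otimes\Hom_{\widetilde{A}_{\alpha}}(\V_{\tau},E)$ is a $\widetilde{G}_{\alpha}$-subbundle for each $i$, and $\gamma$ restricts to a $\widetilde{G}_{\alpha}$-isomorphism onto the corresponding sub-bundle $E^{[\tau_i]}\subseteq E$; thus $E\cong\bigoplus_{i=1}^{r}E^{[\tau_i]}$ as $\widetilde{G}_{\alpha}$-bundles.

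Next I would identify, for each orbit representative $\tau=\tau_i$, the summand $E^{[\tau]}=\bigoplus_{[\tau']\in[\tau]}\V_{\tau'}\otimes\Hom_{\widetilde{A}_{\alpha}}(\V_{\tau'},E)$ with an induced-type bundle built from $\V_{\tau}\otimes\Hom_{\widetilde{A}_{\alpha}}(\V_{\tau},E)$, which by construction is a $\big(\widetilde{G}_{[\tau]}\big)_{\alpha}$-equivariant bundle (the isotropy group $\big(\widetilde{G}_{[\tau]}\big)_{\alpha}$ is exactly the subgroup stabilizing this summand). Concretely, restriction to $\big(\widetilde{G}_{[\tau]}\big)_{\alpha}$ gives a functor $\mathrm{Vect}_{\widetilde{G}_{\alpha}}(X)\to\bigoplus_i \mathrm{Vect}_{(\widetilde{G}_{[\tau_i]})_{\alpha},\tau_i}(X)$ sending $E$ to $\big(\V_{\tau_i}\otimes\Hom_{\widetilde{A}_{\alpha}}(\V_{\tau_i},E)\big)_i$, and in the other direction, induction $\widetilde{G}_{\alpha}\times_{(\widetilde{G}_{[\tau_i]})_{\alpha}}(-)$ applied summand-wise and then added up; the decomposition $E\cong\bigoplus_i E^{[\tau_i]}$ together with the fact that $E^{[\tau_i]}$ is induced from its $\tau_i$-part shows these are mutually inverse on objects. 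One should check that each $\V_{\tau_i}\otimes\Hom_{\widetilde{A}_{\alpha}}(\V_{\tau_i},E)$ really lies in $\mathrm{Vect}_{(\widetilde{G}_{[\tau_i]})_{\alpha},\tau_i}(X)$, i.e.\ that the defining map $\gamma$ of a $\big(\widetilde{G}_{[\tau_i]},\tau_i\big)$-equivariant bundle is an isomorphism — but that is immediate since $\Hom_{\widetilde{A}_{\alpha}}\big(\V_{\tau_i},\V_{\tau_i}\otimes\Hom_{\widetilde{A}_{\alpha}}(\V_{\tau_i},E)\big)\cong\Hom_{\widetilde{A}_{\alpha}}(\V_{\tau_i},E)$ by Schur.

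Then I would pass to $K$-theory. The semigroup isomorphisms just described are additive (they respect $\oplus$), so applying the Grothendieck construction yields a natural isomorphism $^{\alpha}K_{G}^{0}(X)\cong\bigoplus_{i}K^{0}_{(\widetilde{G}_{[\tau_i]})_{\alpha},\tau_i}(X)$, and the formula $[E]\mapsto\bigoplus_i[\V_{\tau_i}\otimes\Hom_{\widetilde{A}_{\alpha}}(\V_{\tau_i},E)]$ is exactly $\Psi_X$ in degree $0$. For degrees $n>0$ one uses that the entire construction is natural in $X$ and compatible with the operations $X\mapsto\Sigma^nX_{+}$ and with reduced groups, so the degree-$0$ isomorphism for $\Sigma^nX_{+}$ gives the statement for $^{\alpha}K_{G}^{n}(X)$; functoriality in $X$ (for $G$-maps on which $A$ acts trivially) is likewise inherited from the fact that $\mathrm{Hom}_{\widetilde{A}_{\alpha}}(\V_{\tau},-)$ and $\V_{\tau}\otimes(-)$ and restriction are all functorial and commute with pullback of bundles along $G$-maps. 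The main obstacle is the bookkeeping in the first paragraph: one must verify carefully, using the explicit formula~(\ref{def:act}) for the $G$-action on $\Irr^{\alpha}(A)$ and the associated representation $\tilde\rho$, that the $\widetilde{G}_{\alpha}$-action genuinely permutes the isotypical summands according to this action (rather than some twist of it), and that the resulting sub-bundle $E^{[\tau_i]}$ is precisely the $\widetilde{G}_{\alpha}$-bundle induced from the $\big(\widetilde{G}_{[\tau_i]}\big)_{\alpha}$-bundle $\V_{\tau_i}\otimes\Hom_{\widetilde{A}_{\alpha}}(\V_{\tau_i},E)$ — everything else then follows formally from Theorem~\ref{can,decom} and additivity of the Grothendieck construction.
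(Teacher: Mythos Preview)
Your proposal is correct and follows essentially the same approach as the paper: decompose $E$ into $\widetilde{G}_{\alpha}$-stable orbit-indexed sub-bundles $E_{\mathcal{A}_i}$ (your $E^{[\tau_i]}$), equip the $\tau$-summand with its $\big(\widetilde{G}_{[\tau]}\big)_{\alpha}$-structure, and invert via induction. The paper differs only in presentation, writing the $\big(\widetilde{G}_{[\tau]}\big)_{\alpha}$-action on $\V_{\tau}\otimes\Hom_{\widetilde{A}_{\alpha}}(\V_{\tau},E)$ explicitly through the operators $M_{(h,z)}=M_{\pi(h)}\tilde{\rho}\big(\tilde{\sigma}(\pi(h))^{-1}(h,z)\big)$ from Section~\ref{section cocycles} rather than transporting it along $\gamma$, and realizing your induction functor $\widetilde{G}_{\alpha}\times_{(\widetilde{G}_{[\tau]})_{\alpha}}(-)$ concretely as $F\mapsto L_F=\bigoplus_{j}\big[(g_j,1)^{-1}\big]^{\ast}F$.
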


\begin{proof}
The proof of this theorem follows the same lines of the proof of~\cite[Theorem 3.1]{GomezUribe}
so we only provide an outline of the proof.

Let us show first that the map $\Psi_{X}$ is well defined. To see this we have to
show that if $\rho$ is an $\alpha$-representation of $A$ then
$\V_{\rho}\otimes\Hom_{\widetilde{A}_{\alpha}}(\V_{\rho},E)$
has the structure of a $\big(\big(\widetilde{G}_{[\rho]}\big)_{\alpha},\rho\big)$-vector bundle.
Following the notation of Section~\ref{section cocycles} fix a set theoretical
section $\sigma\colon Q_{[\rho]}\to G_{[\rho]}$ for the projection map $\pi\colon G_{[\rho]}\to Q_{[\rho]}$
in such a way that $\sigma(1)=1$. Also, for every $q\in Q_{[\rho]}$ fix an~element
$M_{q}\in U(V_{\rho})$ such that
\begin{gather*}
\sigma(q)\cdot \rho(a) =\alpha\big(\sigma(q)^{-1}a,\sigma(q)\big)\alpha\big(\sigma(q),\sigma(q)^{-1}a\big)^{-1}
\rho\big(\sigma(q)^{-1}a\sigma(q)\big)=M_{q}^{-1}\rho(a)M_{q}.
\end{gather*}
This is possible as $\sigma(q)\in G_{[\rho]}$ so that we have $\sigma(q)\cdot \rho\cong \rho$.
We can choose $M_{1}=1$ as $\sigma(1)=1$.

Now if $(h,z)\in \big(\widetilde{G}_{[\rho]}\big)_{\alpha}$ and $v\otimes f\in
\V_{\rho}\otimes \Hom_{\widetilde{A}_{\alpha}}(\V_{\rho},E)$
we define $M_{(h,z)}\in U(V_{\rho})$ by
\begin{gather*}
M_{(h,z)}:=M_{\pi(h)}\tilde{\rho}\big(\tilde{\sigma}(\pi(h))^{-1}(h,z)\big),
\end{gather*}
where $\tilde{\rho}$ and $\tilde{\sigma}$ are defined in a similar way as in
Theorem~\ref{teo,bij}. Observe that $M_{(a,z)}=\tilde{\rho}(a,z)$ for all
$(a,z)\in \widetilde{A}_{\alpha}$. Moreover, given $(h,z)\in \big(\widetilde{G}_{[\rho]}\big)_{\alpha}$
and $v\otimes f\in \V_{\rho}\otimes \Hom_{\widetilde{A}_{\alpha}}(\V_{\rho},E)$
we define
\begin{gather*}
(h,z)\star(v\otimes f)=M_{(h,z)}v\otimes (h,z)\bullet f,
\end{gather*}
where $(h,z)\bullet f (w)=(h,z)f\big(M_{(h,z)}^{-1}w\big)$. Unraveling the definitions it
can be seen that this defines an action of
$(\widetilde{G}_{[\rho]})_{\alpha}$ on $\V_{\rho}\otimes \Hom_{\widetilde{A}_{\alpha}}(\V_{\rho},E)$
in such a way that the central factor $\S^{1}$ acts by multiplication of scalars.
This way $\V_{\rho}\otimes \Hom_{\widetilde{A}_{\alpha}}(\V_{\rho},E)$ has the structure
of a $\big(\widetilde{G}_{[\rho]}\big)_{\alpha}$-vector bundle and $A$ acts by the
$\alpha$-representation $\rho$ on the fibers so that
$\big[\V_{\rho}\otimes \Hom_{\widetilde{A}_{\alpha}}(\V_{\rho},E)\big]\in
K^{\ast}_{(\widetilde{G}_{[\rho]})_{\alpha},\rho}(X)$. This shows that
$\Psi_{X}$ is well defined.

Next we show that $\Psi_{X}$ is an isomorphism. For this write $\Irr^{\alpha}(A)=
\mathcal{A}_{1}\sqcup\mathcal{A}_{2}\sqcup\dots \sqcup\mathcal{A}_{k}$, where
$\mathcal{A}_{1},\mathcal{A}_{2},\dots, \mathcal{A}_{k}$ are the different
$G$-orbits of the action of $G$ on $\Irr^{\alpha}(A)$ defined in equation~(\ref{def:act}).
For every $1\leq i\leq k$ define
\begin{gather*}
E_{\mathcal{A}_{i}}=\bigoplus_{[\tau]\in \mathcal{A}_{i}}
\V_{\tau}\otimes \Hom_{\widetilde{A}_{\alpha}}(\V_{\tau},E).
\end{gather*}
Note that $\V_{\tau}\otimes \Hom_{\widetilde{A}_{\alpha}}(\V_{\tau},E)$ is an
$\widetilde{A}_{\alpha}$-equivariant vector bundle over $X$, so each
$E_{\mathcal{A}_{i}}$ is also an $\widetilde{A}_{\alpha}$-equivariant vector bundle over $X$
and the map
\begin{gather*}
\gamma\colon\ \bigoplus_{i=1}^{k}E_{\mathcal{A}_{i}}=\bigoplus_{[\tau]\in \Irr^{\alpha}(A)}
\V_{\tau}\otimes \Hom_{\widetilde{A}_{\alpha}}(\V_{\tau},E)\rightarrow E
\end{gather*}
defines an isomorphism of $\widetilde{A}_{\alpha}$-vector bundles. We are going to show that each
$E_{\mathcal{A}_{i}}$ is a $\widetilde{G}_{\alpha}$-vector bundle and that the map $\gamma$ is
$\widetilde{G}_{\alpha}$-equivariant. For this fix an index $1\leq i\leq k$ and an irreducible
$\alpha$-representation $\rho\colon A\rightarrow U(V_{\rho})$ such that
$[\rho]\in\mathcal{A}_{i}$. The elements in $\mathcal{A}_{i}$ can be written in the
form $[g_{1}\cdot\rho], \dots, [g_{r_{i}}\cdot\rho]$ for some elements
$g_{1}=1,g_{2},\dots,g_{r_{i}}\in G$. Therefore
\begin{gather*}
E_{\mathcal{A}_{i}}=\bigoplus_{j=1}^{r_{i}}\V_{g_{j}\cdot\rho}\otimes
\Hom_{\widetilde{A}_{\alpha}}(\V_{g_{j}\cdot\rho},E).
\end{gather*}
We can give a structure of
$\widetilde{G}_{\alpha}$-space on $E_{\mathcal{A}_{i}}$ in the following way.
Suppose that $(g,s)\in\widetilde{G}_{\alpha}$ and~that
$v\otimes f\in \V_{\rho}\otimes \Hom_{\widetilde{A}_{\alpha}}(\V_{\rho},E)_{x}$.
Decompose $gg_{j}$ in the form $gg_{j}=g_{l}h$, where $1\leq l\leq r_{i}$ and $h\in G_{[\rho]}$.
In other words $g_{l}$ is the representative chosen for the coset
$(gg_{j})G_{[\rho]}$ and $h=g_{l}^{-1}gg_{j}$. Then
\begin{gather*}
(g,s)(g_{j},1)=(g_{l},1)(h,z),
\end{gather*}
where $z=s\alpha(g,g_{j})\alpha(g_{l},h)^{-1}$. We define
\begin{gather*}
(g,s)\star (v\otimes f):=M_{(h,z)}v\otimes (g,s)\bullet
f \in \big(\V_{g_{l}\cdot\rho}\otimes
\Hom_{\widetilde{A}_{\alpha}}(\V_{g_{l}\cdot\rho},E)\big)_{(g,s)\cdot x},
\end{gather*}
where
\begin{gather*}
(g,s)\bullet f(w)=(g,s)f\big(M_{(h,z)}^{-1}w\big)
=(g,s)(h,z)^{-1}\tilde{\sigma}(\pi(h))f\big(M_{\pi(h)}^{-1}w\big).
\end{gather*}
It can be seen that this defines an action of $\widetilde{G}_{\alpha}\in E_{\mathcal{A}_{i}}$
for each $1\leq i\leq k$ making the vector bundle $E_{\mathcal{A}_{i}}$
into a $\widetilde{G}_{\alpha}$-vector bundle in such a way that the central factor
$\S^{1}$ acts by multiplication of scalars. Furthermore, the map
\begin{gather*}
\gamma\colon\ \bigoplus_{i=1}^{k}E_{\mathcal{A}_{i}} \rightarrow E
\end{gather*}
is an isomorphism of $\widetilde{A}_{\alpha}$-vector and the map $\gamma$ is
$\widetilde{G}_{\alpha}$-equivariant so that $\gamma$ is an isomorphism of~$\widetilde{G}_{\alpha}$-vector bundles. Now, the desired map $\Psi_{X}$ can be seen as the
direct sum $\oplus_{i=1}^{k}\Psi_{X}^{i}$ choosing for each $\mathcal{A}_{i}$ a representation
$[\tau_{i}]\in\mathcal{A}_{i}$, where each $\Psi_{X}^{i}$ is given by
\begin{align*}
\Psi_{X}^{i}\colon \phantom{i}^{\alpha}K_{G}^{\ast}(X)&\rightarrow
K^{*}_{\left(\widetilde{G}_{[\tau_{i}]}\right)_{\alpha},\tau_{i}}(X),
\\
[E]&\mapsto \big[\V_{\tau_{i}}\otimes\Hom_{\widetilde{A}_{\alpha}}(\V_{\tau_{i}},E)\big].
\end{align*}
In what follows we will construct the map
$\zeta_{i}\colon K^{*}_{\left(\widetilde{G}_{[\tau_{i}]}\right)_{\alpha},\tau_{i}}(X)
\rightarrow \phantom{i}^{\alpha}K_{G}^{\ast}(X)$
which will be the right inverse of $\Psi_{X}^{i}$. Take $\rho=\tau_{i}$ and consider
a vector bundle $F\in\text{Vect}_{\left(\widetilde{G}_{[\rho]}\right)_{\alpha},\rho}(X)$.
Fix $g_{1}=1,g_{2},\dots,g_{r}$ representatives for the different cosets in $G/G_{[\rho]}$.
Let
\begin{gather*}
L_{F}:=\bigoplus_{j=1}^{r}\big[(g_{j},1)^{-1}\big]^{\ast} F.
\end{gather*}
Using ideas similar to the ones used in~\cite[Theorem 3.1]{GomezUribe} we can endow $L_{F}$
with the structure of a $\widetilde{G}_{\alpha}$-vector bundle on which the
central factor $\S^{1}$ acts by multiplication of scalars in such a~way that the action
of $\big(\widetilde{G}_{[\rho]}\big)_{\alpha}$ on $\big[(g_{1},1)^{-1}\big]^{\ast} F\cong F$ agrees with
the given action of $\big(\widetilde{G}_{[\rho]}\big)_{\alpha}$ on~$F$. We~define
\begin{align*}
\zeta_{i}\colon\ K^{*}_{\left(\widetilde{G}_{[\tau_{i}]}\right)_{\alpha},\tau_{i}}(X)
&\rightarrow \phantom{i}^{\alpha}K_{G}^{\ast}(X),
\\
[F]&\mapsto [L_{F}].
\end{align*}
Now, since we have the isomorphism
$\V_{\rho}\otimes \Hom_{\widetilde{A}_{\alpha}}
\big(\V_{\rho},\oplus_{j=1}^{r}\big[(g_{j},1)^{-1}\big]^{\ast} F\big)\cong F$ as
$\big(\widetilde{G}_[\rho]\big)_{\alpha}$ vector bundles. We obtain at the level of $K$-theory that
$\zeta_{i}$ is a right inverse for $\Psi_{i}$ so that
$\zeta=\oplus_{i=1}^{r}\zeta_{i}$ is a right inverse for $\Psi_{X}$. In a similar way, using the
work given above it can be seen that $\zeta$ is also a left inverse so that the
map $\Psi_{X}$ is indeed an isomorphism.

To finish, we observe that functoriality follows from the fact that if $\tau$ is an
$\alpha$-representation of $A$ then the bundles
$\V_{\tau}\otimes \Hom_{\widetilde{A}_{\alpha}}(\V_{\tau},f^{\ast} E)$ and
$f^{\ast}\big(\V_{\tau}\otimes \Hom_{\widetilde{A}_{\alpha}}(\V_{\tau},E)\big)$
are canonically isomorphic as $((\widetilde{G}_{[\tau]})_{\alpha},\tau)$-equivariant bundles
whenever $f\colon Y\rightarrow X$ is a
$G$-equivariant map from spaces on which $A$ acts trivially.
\end{proof}


As a result of Theorem~\ref{ThD} and Corollary~\ref{identification} we obtain the following
theorem that is the main result of this article.

\begin{Th}\label{Thmain}
Suppose that $A$ is a normal subgroup of a finite group $G$. Let $\alpha$
be a normalized 2-cocycle on $G$ with values in $\S^{1}$ and $X$ a
compact $G$-space on which $A$ acts trivially.
Then there is a natural isomorphism
\begin{align*}
\Phi_{X}\colon\ \phantom{i}^{\alpha}K_{G}^{\ast}(X)&\rightarrow
\bigoplus_{[\tau]\in G\backslash \Irr^{\alpha}(A)}\phantom{i}
^{\beta_{\tau,\alpha}}K^{\ast}_{Q_{[\tau]}}(X),
\\
[E]&\mapsto \bigoplus_{[\tau]\in G\backslash \Irr^{\alpha}(A)}
\big[\Hom_{\widetilde{A}_{\alpha}}(\V_{\tau},E)\big].
\end{align*}
Here $\beta_{\tau,\alpha}$ is the $2$-cocycle associated to $\tau$ and $\alpha$ as defined in equation~$(\ref{defQbeta})$. This isomorphism is functorial on maps $X\rightarrow Y$ of
$G$-spaces on which $A$ acts trivially.
\end{Th}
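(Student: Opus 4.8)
The plan is to obtain $\Phi_{X}$ by composing the two isomorphisms already established in this section. Theorem~\ref{ThD} provides a natural isomorphism
\begin{gather*}
\Psi_{X}\colon\ {}^{\alpha}K_{G}^{\ast}(X)\stackrel{\cong}{\rightarrow}\bigoplus_{[\tau]\in G\backslash\Irr^{\alpha}(A)}K^{\ast}_{(\widetilde{G}_{[\tau]})_{\alpha},\tau}(X),\qquad [E]\mapsto\bigoplus_{[\tau]}\big[\V_{\tau}\otimes\Hom_{\widetilde{A}_{\alpha}}(\V_{\tau},E)\big],
\end{gather*}
and Corollary~\ref{identification}, applied to the short exact sequence $1\rightarrow A\rightarrow G_{[\tau]}\rightarrow Q_{[\tau]}\rightarrow 1$ and to the representation $\tau$ (which is legitimate because $g\cdot\tau\cong\tau$ for every $g\in G_{[\tau]}$, by definition of the isotropy subgroup), provides for each orbit a natural isomorphism
\begin{gather*}
\Theta_{[\tau]}\colon\ K^{\ast}_{(\widetilde{G}_{[\tau]})_{\alpha},\tau}(X)\stackrel{\cong}{\rightarrow}{}^{\beta_{\tau,\alpha}}K^{\ast}_{Q_{[\tau]}}(X),\qquad [E']\mapsto\big[\Hom_{\widetilde{A}_{\alpha}}(\V_{\tau},E')\big].
\end{gather*}
I would then define $\Phi_{X}:=\big(\bigoplus_{[\tau]}\Theta_{[\tau]}\big)\circ\Psi_{X}$; being a composite of isomorphisms, it is an isomorphism.

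It then only remains to check that $\Phi_{X}$ is given by the stated formula. Fixing an orbit representative $\tau$, the $[\tau]$-component of $\Phi_{X}([E])$ equals
\begin{gather*}
\big[\Hom_{\widetilde{A}_{\alpha}}\big(\V_{\tau},\V_{\tau}\otimes\Hom_{\widetilde{A}_{\alpha}}(\V_{\tau},E)\big)\big],
\end{gather*}
and I would identify this with $\big[\Hom_{\widetilde{A}_{\alpha}}(\V_{\tau},E)\big]$ via the canonical $\widetilde{Q}_{\beta_{\tau,\alpha}}$-equivariant isomorphism $F\stackrel{\cong}{\rightarrow}\Hom_{\widetilde{A}_{\alpha}}(\V_{\tau},\V_{\tau}\otimes F)$, $x\mapsto(v\mapsto v\otimes x)$, recorded in the proof of Theorem~\ref{teo,bij}, applied with $F=\Hom_{\widetilde{A}_{\alpha}}(\V_{\tau},E)$. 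This is precisely the assertion that $\Theta_{[\tau]}$ is inverse to the assignment $[F]\mapsto[\V_{\tau}\otimes F]$, so no further computation is needed. Naturality of $\Phi_{X}$ on $G$-maps $f\colon X\rightarrow Y$ of spaces on which $A$ acts trivially is inherited from that of $\Psi_{X}$ and of each $\Theta_{[\tau]}$, together with the canonical identification $\Hom_{\widetilde{A}_{\alpha}}(\V_{\tau},f^{\ast}E)\cong f^{\ast}\Hom_{\widetilde{A}_{\alpha}}(\V_{\tau},E)$ of $\widetilde{Q}_{\beta_{\tau,\alpha}}$-bundles.

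The essential content of the statement is already carried by Theorem~\ref{ThD}, so I do not expect a genuine obstacle in this final step; it is assembly. The one point worth spelling out carefully --- and the only place where a reader could go astray --- is that the cocycle $\beta_{\tau,\alpha}$ and the auxiliary data $(\sigma,\{M_{q}\})$ entering $\Theta_{[\tau]}$ must be taken for the restricted sequence $1\rightarrow A\rightarrow G_{[\tau]}\rightarrow Q_{[\tau]}\rightarrow 1$, not for $G$ itself; this is exactly what makes the $\beta_{\tau,\alpha}$-twisted $K$-theory of the smaller quotient $Q_{[\tau]}$, rather than of $Q$, appear in the direct sum.
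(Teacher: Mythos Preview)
Your proposal is correct and follows exactly the paper's approach: the paper does not give a separate proof of Theorem~\ref{Thmain} at all, but simply records it ``as a result of Theorem~\ref{ThD} and Corollary~\ref{identification}''. Your write-up in fact supplies more detail than the paper, spelling out the identification of the composite with the claimed formula and flagging that Corollary~\ref{identification} must be applied to the restricted extension $1\to A\to G_{[\tau]}\to Q_{[\tau]}\to 1$.
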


%
%

\section{Atiyah--Hirzebruch spectral sequence}\label{Section 4}

As an application of Theorem~\ref{Thmain} we obtain a formula for
the third differential in the Atiyah--Hirzebruch spectral sequence for $\alpha$-twisted $G$-equivariant $K$-theory
under suitable hypotheses. The treatment in this section generalizes the one given in~\cite[Section 5]{GomezUribe}.

To start, assume that $A$ is a normal subgroup of a finite group $G$ and let $Q=G/A$.
Suppose that $Q$ acts freely on a compact and Hausdorff space $X$. Let $\beta\colon Q\times Q\to \S^{1}$
be a normalized $2$-cocycle with values in $\S^{1}$. As is explained in~\cite[equation~3.6]{GomezUribe},
under these hypotheses the $Q$-equivariant twisted $K$-group $\phantom{i}^{\beta}K_{Q}^{\ast}(X)$ can
be described as a non-equivariant twisted $K$-group over the space $X/Q$. To formulate this,
let
\begin{gather*}
1\to \S^{1}\to \widetilde{Q}_\beta\to Q\to 1
\end{gather*}
be the central extension corresponding to $\beta$. Fix a separable Hilbert space~$\calh$
endowed with a unitary linear action of $\widetilde{Q}_{\beta}$ such that the central factor~$\S^{1}$
acts by multiplication of scalars and such that all the irreducible
representations of this kind appear infinitely number of times in~$\calh$. \mbox{Under}~these hypotheses, the
space of Fredholm operators $\text{Fred}(\calh)$ classifies $\beta$-twisted $Q$-equivariant $K$-theory. Thus
there is a natural isomorphism
$\phantom{i}^{\beta}K_{Q}^{0}(X)\cong [X,\text{Fred}(\calh)]_{Q}$. In~addition, observe that
the projective unitary group $PU(\calh)$ is an Eilenberg--Maclane space of type~$K(\Z,2)$.
On the other hand, as $\S^{1}$ acts by multiplication of scalars on $\calh$, the action of
$\widetilde{Q}_\beta$ on~$\calh$ induces
a commutative diagram of central extensions of the form
\begin{gather*}
\begin{CD}
1 @>>> \S^{1}@>>>\widetilde{Q}_{\beta}@>>> Q @>>> 1\\
@. @V{\text{id}}VV @V{\tilde{\phi}_{\beta}}VV @VV{\phi_{\beta}}V @. \\
1 @>>> \S^{1}@>>>U(\calh) @>>> PU(\calh) @>>> 1.
\end{CD}
\end{gather*}
Upon passage to classifying spaces, we obtain the map $B\phi_{\beta}\colon BQ\to BPU(\calh)$
and $BPU(\calh)$ is an Eilenberg--Maclane space of type $K(\Z,3)$ so that the homotopy class of
$B\phi_{\beta}$ corresponds to a cohomology class $\big[\beta^\Z\big]\in H^{3}(BQ;\Z)$. We remark
that the class $\big[\beta^\Z\big]$ agrees with the class $[\beta]\in H^{2}(Q;\S^{1})$ defined by the
cocycle $\beta$ under the standard identification $H^{2}\big(Q;\S^{1}\big)\cong H^{3}(BQ;\Z)$. In~addition,
as $X$ is a free $Q$-space there is a unique up to homotopy $Q$-equivariant map $X \to EQ$ inducing
a map $h\colon X/Q \to BQ$ at the level of the quotient spaces. This way we obtain the continuous
map $f_{\beta}:=B\phi_\beta\circ h\colon X/Q\to BPU(\calh)$ which is precisely
the data needed to define the non-equivariant $f_{\beta}$-twisted $K$-groups. We remark
that the cohomology class associated to~the homotopy class of the map $f_{\beta}$ is precisely
$h^{*}\big(\big[\beta^\Z\big]\big)\in H^{3}(X/Q;\Z)$. By~\cite[equation~(3.6)]{GomezUribe} we~have an isomorphism
\begin{gather}\label{eqvsnoneq}
\phantom{i}^{\beta}K_{Q}^{\ast}(X)\cong K^{*}(X/Q;f_{\beta}).
\end{gather}
We refer the reader to~\cite[Section 3]{GomezUribe} for the details on this construction.

Assume now that $X$ is a compact $G$-CW complex $X$ in such a way that $G_{x}=A$ for every $x\in A$.
Thus we are assuming that the $G$ action on $X$ has constant isotropy subgroups.
This is equivalent to asking that the group $Q$ acts freely on $X$. Fix $\alpha\in Z^2\big(G,\S^{1}\big)$
a normalized $2$-cocycle. Let $\calu=\{U_{i}\}_{i\in \cali}$ (where $\cali$ is a well
ordered set) be a contractible slice cover of~$X$ by~$G$-invariant open sets.
Thus for every sequence $i_{1}\le \cdots\le i_{p}$ of elements in $\cali$ with
$U_{i_{1},\dots,i_{p}}:=U_{i_{1}}\cap\cdots\cap U_{i_{p}}$ nonempty
we can find some element $x_{i_{1},\dots,i_{p}}\in U_{i_{1},\dots, i_{p}}$
such that the inclusion map
$Gx_{i_{1},\dots,i_{p}}\hookrightarrow U_{i_{1},\dots,i_{p}}$ is a
$G$-homotopy equivalence. It~can be seen that such a~contractible slice cover exists
for any compact $G$-CW complex. Using the $G$-cover $\calu$ we can construct a spectral sequence
akin to the Atiyah--Hirzebruch spectral sequence that converges to $\phantom{i}^{\alpha}K_{G}^{\ast}(X)$.
This spectral sequence can be constructed in the same way as in the case of equivariant $K$-theory
explained in~\cite[Section 5]{Segal-Classifying}. The $E_{2}$-page of this
spectral sequence is such that $E_2^{p,q}=0$ if $q$ is odd. For even values of $q$ we have that
$E_2^{p,q}=H^p_G(X, R^{\alpha}(-))$, the $p$-th Bredon cohomology group $H^p_G(X, R^{\alpha}(-))$
with coefficients in $R^{\alpha}(-)$. Here $R^{\alpha}(-)$ denotes the coefficient system given by the
$\alpha$-twisted representation groups. Explicitly, this coefficient
systems assigns $R^{\alpha}(H)$ to the coset $G/H$. From here it follows automatically that the
differential ${\rm d}_{2}$ is trivial and thus $E^{*,*}_{2}=E^{*,*}_{3}$.

\begin{Th}
Under the above hypotheses, the $E_{3}$-term in the Atiyah--Hirzebruch spectral sequence is such that
\begin{gather*}
E_{3}^{p,q} \cong
\begin{cases}
\bigoplus_{[\tau]\in Q \backslash \Irr^{\alpha}(A)}H^{p}\big(X/Q_{[\tau]};\Z\big) &\text{if}\quad q
\ \text{is even},
\\
0& \text{if}\quad q\ \text{is odd}.
\end{cases}
\end{gather*}
Furthermore, the differential
\begin{gather*}
{\rm d}_{3}\colon\ \bigoplus_{[\tau]\in Q \backslash \Irr^{\alpha}(A)}H^{p}\big(X/Q_{[\tau]};\Z\big) \to
\bigoplus_{[\tau]\in Q \backslash \Irr^{\alpha}(A)}H^{p}\big(X/Q_{[\tau]};\Z\big)
\end{gather*}
is defined coordinate-wise in such a way that for $\eta\in H^{p}\big(X/Q_{[\tau]};\Z\big)$ we have
\begin{gather*}
{\rm d}_3(\eta) = Sq^3_\Z \eta - h^*\big[\beta_{\tau,\alpha}^{\Z}\big] \cup \eta.
\end{gather*}
Here $\beta_{\tau,\alpha}$ is the $2$-cocycle associated to $\tau$ and $\alpha$ as defined in equation~$(\ref{defQbeta})$. Also, $Sq^3_\Z$ is the composition of the maps $\beta \circ Sq^2 \circ \text{mod}_2$, where $\text{mod}_2$
is the reduction modulo $2$, $Sq^2$ is the Steenrod operation, and $\beta$ is the Bockstein map for the coefficient sequence
$0\to\Z \xrightarrow{2}\Z \to \Z/2\to 0$.
\end{Th}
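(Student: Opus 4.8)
The plan is to reduce the statement to the decomposition of Theorem~\ref{Thmain}, combined with the identification~(\ref{eqvsnoneq}) of twisted equivariant $K$-theory of a free action with non-equivariant twisted $K$-theory of the orbit space, and then to invoke the known formula for the third differential of the Atiyah--Hirzebruch spectral sequence of a non-equivariant twisted $K$-theory. This is the twisted counterpart of the argument carried out in~\cite[Section~5]{GomezUribe}.

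Since the $G$-action on $X$ factors through the free $Q$-action, every isotropy subgroup $Q_{[\tau]}\le Q$ also acts freely on $X$. By Theorem~\ref{Thmain} there is a natural isomorphism
\begin{gather*}
\phantom{i}^{\alpha}K_{G}^{\ast}(X)\cong\bigoplus_{[\tau]\in Q\backslash \Irr^{\alpha}(A)}{}^{\beta_{\tau,\alpha}}K^{\ast}_{Q_{[\tau]}}(X),
\end{gather*}
and, applying~(\ref{eqvsnoneq}) to each free $Q_{[\tau]}$-action, each summand is naturally isomorphic to $K^{\ast}\big(X/Q_{[\tau]};f_{\beta_{\tau,\alpha}}\big)$, where $f_{\beta_{\tau,\alpha}}\colon X/Q_{[\tau]}\to BPU(\calh)$ is a classifying map whose homotopy class corresponds to $h^{\ast}\big[\beta_{\tau,\alpha}^{\Z}\big]\in H^{3}\big(X/Q_{[\tau]};\Z\big)$, with $h$ the map $X/Q_{[\tau]}\to BQ_{[\tau]}$ induced by the free action.

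Next I would promote this to an isomorphism of spectral sequences. All the spaces occurring in the slice cover $\calu=\{U_{i}\}_{i\in\cali}$ carry trivial $A$-action, so the isomorphism above is an isomorphism of cohomology theories on the category of $G$-spaces on which $A$ acts trivially; the slice-cover filtration lives inside this category, and the Atiyah--Hirzebruch-type spectral sequence is built functorially from it. Hence the spectral sequence converging to $\phantom{i}^{\alpha}K_{G}^{\ast}(X)$ is isomorphic to the direct sum, over $[\tau]\in Q\backslash \Irr^{\alpha}(A)$, of the spectral sequences converging to $\phantom{i}^{\beta_{\tau,\alpha}}K^{\ast}_{Q_{[\tau]}}(X)$, and, by~(\ref{eqvsnoneq}), the $[\tau]$-th summand is the Atiyah--Hirzebruch spectral sequence of the non-equivariant twisted $K$-theory $K^{\ast}\big(X/Q_{[\tau]};f_{\beta_{\tau,\alpha}}\big)$. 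This is consistent with the coefficient system: as a $Q$-module $R^{\alpha}(A)$ is the permutation module $\bigoplus_{[\tau]}\Z\big[Q/Q_{[\tau]}\big]$ (the $Q$-action permutes $\Irr^{\alpha}(A)$ with orbits $Q/Q_{[\tau]}$), so
\begin{gather*}
H^{p}_{G}\big(X,R^{\alpha}(-)\big)\cong\bigoplus_{[\tau]}H^{p}_{G}\big(X,\Z\big[Q/Q_{[\tau]}\big](-)\big)\cong\bigoplus_{[\tau]}H^{p}\big(X\times_{Q}(Q/Q_{[\tau]});\Z\big)\cong\bigoplus_{[\tau]}H^{p}\big(X/Q_{[\tau]};\Z\big),
\end{gather*}
using the Shapiro-type identification for the free $Q$-action and $X\times_{Q}(Q/Q_{[\tau]})\cong X/Q_{[\tau]}$. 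Since $d_{2}=0$, this already yields the asserted form of the $E_{3}$-term.

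Finally, on the $[\tau]$-summand the differential $d_{3}$ is the third differential of the Atiyah--Hirzebruch spectral sequence of $K^{\ast}\big(X/Q_{[\tau]};f_{\beta_{\tau,\alpha}}\big)$ with twisting class $h^{\ast}\big[\beta_{\tau,\alpha}^{\Z}\big]$. The standard computation of this differential (the untwisted part being the $Sq^{3}_{\Z}$ of~\cite[Section~5]{GomezUribe}, together with the contribution of the twist) gives, for $\eta\in H^{p}\big(X/Q_{[\tau]};\Z\big)$,
\begin{gather*}
d_{3}(\eta)=Sq^{3}_{\Z}\eta-h^{\ast}\big[\beta_{\tau,\alpha}^{\Z}\big]\cup\eta,
\end{gather*}
which is exactly the coordinate-wise formula claimed. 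I expect the main obstacle to be the third step: verifying carefully that the equivariant spectral sequence splits \emph{as a spectral sequence}, compatibly with Theorem~\ref{Thmain} and~(\ref{eqvsnoneq}) --- that is, that all differentials and extensions in the slice-cover filtration respect the natural splitting --- which is routine but requires tracking the naturality of the decomposition through the construction of the spectral sequence, exactly as in~\cite[Section~5]{GomezUribe}.
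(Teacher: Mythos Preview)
Your proposal is correct and follows essentially the same approach as the paper: both use Theorem~\ref{Thmain} together with~(\ref{eqvsnoneq}) to split the spectral sequence into a direct sum of non-equivariant Atiyah--Hirzebruch spectral sequences for twisted $K$-theory, identify the $E_3$-term via the permutation-module decomposition of $R^{\alpha}(A)$, and then read off $d_3$ summandwise. The only minor difference is that the paper cites \cite[Proposition~4.6]{Atiyah-Segal2} directly for the formula $d_3(\eta)=Sq^3_{\Z}\eta - h^{\ast}[\beta_{\tau,\alpha}^{\Z}]\cup\eta$ in the non-equivariant twisted case, whereas you refer to it as ``the standard computation''; you should make that citation explicit.
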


\begin{proof}
By assumption the action of $G$ on $X$ has constant isotropy, therefore the Bredon coho\-mo\-logy groups
$H^*_G(X, R^{\alpha}(-))$ can be identified with the cohomology
of the cochain comp\-lex $\Hom_{\Z[G]}(C_{*}(X),R^{\alpha}(A))$. The group $A$ acts trivially on both
$C_{*}(X)$ and $R^{\alpha}(A)$ so that this cochain comp\-lex is isomorphic to
$\Hom_{\Z[Q]}(C_{*}(X),R^{\alpha}(A))$. As a $Q$-module $R^{\alpha}(A)$ is~a~per\-mutation module.
Therefore, as a $Q$-representation we have
an isomorphism
\begin{gather*}
R^{\alpha}(A)\cong \bigoplus_{[\tau]\in Q \backslash\Irr^{\alpha}(A)} \Z\big[Q/Q_{[\tau]}\big].
\end{gather*}
Via this isomorphism we can identify $H_G^p(X,R^{\alpha}(-))$ with
$\bigoplus_{[\tau]\in Q \backslash\Irr^{\alpha}(A)}H^{p}\big(X/Q_{[\tau]};\Z\big)$. For even values of $q$
it follows that $E_{3}^{p,q}=E_{2}^{p,q}=\bigoplus_{[\tau]\in Q \backslash \Irr^{\alpha}(A)}H^{p}(X/Q_{[\tau]};\Z)$.
This proves the first part of the theorem since we already know that for odd values of $q$ we have
$E_{3}^{p,q}=E_{2}^{p,q}=0$.

To determine the third differential we use Theorem~\ref{Thmain} to obtain an isomorphism
\begin{gather*}
\Phi_{X}\colon \ \phantom{i}^{\alpha}K_{G}^{\ast}(X)\rightarrow
\bigoplus_{[\tau]\in G\backslash \Irr^{\alpha}(A)}\phantom{i}
^{\beta_{\tau,\alpha}}K^{\ast}_{Q_{[\tau]}}(X).
\end{gather*}
By hypothesis, the group $Q$ acts freely on $X$ and thus for each $[\tau]\in G\backslash \Irr^{\alpha}(A)$
the group $Q_{[\tau]}$ also acts freely on $X$. This together with (\ref{eqvsnoneq})
provides an isomorphism
\begin{gather*}
\phantom{i}^{\alpha}K_{G}^{\ast}(X)\cong
\bigoplus_{[\tau]\in Q \backslash \Irr^{\alpha}(A)} K^{*}\big(X/Q_{[\tau]}; f_{\beta_{\tau,\alpha}}\big).
\end{gather*}
For each $[\tau]\in Q \backslash \Irr^{\alpha}(A)$ passing to the quotient we obtain an open cover
$\calu/Q_{[\tau]}:=\!\big\{U_{i}/Q_{[\tau]}\big\}_{i\in \cali}$ of the quotient space $X/Q_{[\tau]}$. Using the
cover $\calu/Q_{[\tau]}$ we obtain a spectral sequence that converges to
$K^{*}\big(X/Q_{[\tau]}; f_{\beta_{\tau,\alpha}}\big)$. The naturality of Theorem~\ref{Thmain} implies that
the spectral sequence computing $\phantom{i}^{\alpha}K_{G}^{\ast}(X)$ decomposes as a direct sum
of the spectral sequences associated to the non-equivariant twisted K-theories
$K^{*}\big(X/Q_{[\tau]}; f_{\beta_{\tau,\alpha}}\big)$. As it was pointed out above,
the cohomology class associated to the homotopy class of the map $f_{\beta_{\tau,\alpha}}$ is precisely
$h^{*}\big(\big[\beta_{\tau,\alpha}^\Z\big]\big)\in H^{3}(X/Q;\Z)$. \mbox{Using}~\cite[Proposition~4.6]{Atiyah-Segal2} we conclude
that the third differential in the Atiyah--Hirzebruch spectral sequence to
$K^{*}\big(X/Q_{[\rho]}; f_{\beta_{\tau,\alpha}}\big)$ is the operator
\begin{align*}
{\rm d}^\rho_3\colon\ H^{*}(X/Q_{[\tau]};\Z) &\to H^{*+3}\big(X/Q_{[\tau]};\Z\big),
\\
\eta&\mapsto d^\tau_3(\eta)= Sq^3_\Z \eta - h^*\big[\beta_{\tau,\alpha}^\Z\big] \cup \eta.
\end{align*}
This proves the theorem.
\end{proof}

\section{Examples}\label{Section 5}

In this section we explore some examples of Theorems~\ref{ThD} and~\ref{Thmain}
for the dihedral groups $D_{2n}$, where $n\ge 2$ an even integer.


We start by considering first the particular case where $G=D_{8}$. The group $D_{8}$
is generated by the elements $a,b$ subject to the relations $a^{4}=b^{2}=1$ and $bab=a^{3}$.
Let $\alpha\colon D_{8}\times D_{8}\rightarrow \S^{1}$ be the 2-cocycle defined by
\begin{gather*}
\alpha\big(a^{l},a^{j}b^{k}\big)=1\qquad \text{and}\qquad
\alpha\big(a^{l}b,a^{j}b^{k}\big)=i^{j}
\quad \text{for} \quad 0\leq j,l\leq 3 \quad \text{and}\quad k=0,1.
\end{gather*}
Note that $\alpha$ is a nontrivial normalized 2-cocycle such that its corresponding
cohomology class defines the generator of $H^{2}\big(D_{8};\S^{1}\big)\cong \Z/2$.
By Example~\ref{ej,D2n}, taking $n=4$, we know that up to isomorphism $D_{8}$ has two
irreducible projective $\alpha$-representations
$\tau_{1}$ and $\tau_{2}$ defined by
\begin{gather*}
\tau_{l}\big(a^{j}b^{k}\big)=A_{l}^{j}B_{l}^{k} \qquad \text{for} \quad 0\leq j,k\leq 3
\quad \text{and} \quad l=0,1.
\end{gather*}
In the above definition we have
\begin{gather*}
A_{1}=
\begin{pmatrix}
i & 0\\
0 & 1
\end{pmatrix},\qquad
A_{2}=
\begin{pmatrix}
-1 & 0\\
0 & -i
\end{pmatrix}
\qquad \text{and}\qquad
B_{1}=B_{2}=
\begin{pmatrix}
0 & 1\\
1 & 0
\end{pmatrix}\!.
\end{gather*}
With this in mind we are going to explore the following examples of Theorems
\ref{ThD} and~\ref{Thmain}.

\begin{Ex}\label{Example1} Suppose first that $G=D_{8}$ and $A=\Z/4=\langle a\rangle$. Therefore
\begin{gather*}
Q=G/A=\{[1],[b]\}\cong \Z/2.
\end{gather*}
Let us take $X$ to be the space with only one point $\ast$ equipped with the trivial
$D_{8}$-action. In~this case $^{\alpha}K^{\ast}_{D_{8}}(\ast)= R^{\alpha}(D_{8})$, where
$R^{\alpha}(D_{8})$ denotes the $\alpha$-twisted representation group of $D_{8}$.
As~pointed out above $\tau_{1}$ and $\tau_{2}$ are the only irreducible
$\alpha$-representations of $D_{8}$ and thus we have an isomorphism of abelian groups
\begin{gather*}
\phantom{i}^{\alpha}K^{\ast}_{D_{8}}(\ast)\cong R^{\alpha}(D_{8})
=\Z\tau_{1}\oplus\Z\tau_{2}.
\end{gather*}
On the other hand, observe that
$\alpha|_{A}$ is trivial so that
\begin{gather*}
\Irr^{\alpha}(A)=\Irr(A)=\big\{[1],[\rho],\big[\rho^{2}\big], \big[\rho^{3}\big]\big\},
\end{gather*}
where $\rho\colon A\rightarrow \C$ is the irreducible representation defined by $\rho(a)=i$.
For the action of $D_{8}$ on~$\Irr^{\alpha}(A)$ we have
\begin{gather*}
b\cdot\rho(a)=\alpha(ba,b)\alpha(b,ba)^{-1}\rho(bab)
=\alpha\big(a^{3}b,b\big)\alpha\big(b,a^{3}b\big)^{-1}\rho(a^{3})
=\big(i^{3}\big)^{-1}i^{3}=1
\end{gather*}
therefore $b\cdot \rho=1$. Moreover,
\begin{gather*}
b\cdot\rho^{2}(a)=\alpha(ba,b)\alpha(b,ba)^{-1}\rho^{2}(bab)
=\alpha\big(a^{3}b,b\big)\alpha\big(b,a^{3}b\big)^{-1}\rho^{2}\big(a^{3}\big)
=\big(i^{3}\big)^{-1}\big(\rho^{2}(a)\big)^{3}=-i
\end{gather*}
so that $b\cdot\rho^{2}=\rho^{3}$. We conclude that orbits of the $D_{8}$ action on
$\Irr^{\alpha}(A)$ are $\{[1],[\rho]\}$ and $\big\{\big[\rho^{2}\big],\big[\rho^{3}\big]\big\}$. Thus we can choose
$[1]$ and $\big[\rho^{2}\big]$ as representatives for the elements in $D_{8}\backslash \Irr^{\alpha}(A)$
and $G_{[1]}=G_{[\rho^{2}]}=A$.
In this case Theorem~\ref{ThD} gives us an isomorphism
\begin{gather*}
\Psi\colon\ R^{\alpha}(D_{8})=^{\alpha}K^{\ast}_{D_{8}}(\ast)\stackrel{\cong}{\rightarrow}
K^{\ast}_{(\widetilde{G}_{[1]})_{\alpha},1}(\ast)\oplus
K^{\ast}_{(\widetilde{G}_{[\rho^{2}]})_{\alpha},\rho^{2}}(\ast).
\end{gather*}
As $G_{[1]}=G_{[\rho^{2}]}=A$ and $\alpha$ restricted to $A$ is trivial we have that
$\big(\widetilde{G}_{[1]}\big)_{\alpha}=\big(\widetilde{G}_{[\rho^{2}]}\big)_{\alpha}=A\times \S^{1}$.
Therefore $K^{\ast}_{(\widetilde{G}_{[1]})_{\alpha},1}(\ast)\cong \Z\tilde{1}$ and
$K^{\ast}_{(\widetilde{G}_{[\rho^{2}]})_{\alpha},\rho^{2}}(\ast)\cong \Z\tilde{\rho^{2}}$.
(Recall that $\tilde{\rho^{2}}$ denotes the representation of
$\big(\widetilde{G}_{[\rho^{2}]}\big)_{\alpha}$ on which $\S^{1}$ acts by multiplication of
scalars corresponding to $\rho^{2}$ and similarly for $\tilde{1}$).
For the representations $\tau_{1}$ and $\tau_{2}$ we have
 \begin{gather*}
 \arraycolsep=-.05ex
\tau_{1}(a)=
\left(\,\begin{matrix}
 \fbox{$i$} &0 \\[-.2ex]
 0 &\fbox{1}
 \end{matrix}\,\right) \qquad\text{and}\qquad
\tau_{2}(a)=
\left(\,\begin{matrix}
 \fbox{$-1$} &0 \\[-.2ex]
 0 & \fbox{$-i$}
 \end{matrix}\,\right)\!.
 \end{gather*}
Thus as $A$-representations $\tau_{1}$ is isomorphic to $1\oplus \rho$ and
$\tau_{2}$ is isomorphic to $\rho^{2}\oplus\rho^{3}$.
Moreover, in the isomorphism given by Theorem~\ref{ThD} we have
\begin{align*}
\Psi\colon\ R^{\alpha}(D_{8})\cong \Z\tau_{1}\oplus\Z\tau_{2}
&\stackrel{\cong}{\rightarrow} K^{\ast}_{(\widetilde{G}_{[1]})_{\alpha},1}(\ast)\oplus
K^{\ast}_{(\widetilde{G}_{[\rho^{2}]})_{\alpha},\rho^{2}}\cong
\Z{\tilde{1}}\oplus \Z\tilde{\rho^{2}},
\\
\tau_{1}&\mapsto \tilde{1},
\\
\tau_{2}&\mapsto \tilde{\rho^{2}}.
\end{align*}
On the other hand, by Theorem~\ref{Thmain} we have an isomorphism
\begin{gather*}
\Phi\colon\ R^{\alpha}(D_{8})\stackrel{\cong }{\rightarrow} \phantom{i}
^{\beta_{1,\alpha}}K^{\ast}_{Q_{[1]}}(\ast)\oplus \phantom{i}
^{\beta_{\rho^{2},\alpha}}K^{\ast}_{Q_{[\rho^{2}]}}(\ast).
\end{gather*}
As $G_{[1]}=G_{[\rho^{2}]}=A$ we have that $Q_{[1]}=Q_{[\rho^{2}]}=\{1\}$ is the
trivial group. Therefore $\beta_{1,\alpha}$ and~$\beta_{\rho^{2},\alpha}$ are
the trivial cocycles and Theorem~\ref{Thmain} gives us the isomorphism
\begin{gather*}
\Phi\colon\ R^{\alpha}(D_{8})\cong \Z\tau_{1}\oplus\Z\tau_{2} \stackrel{\cong}{\rightarrow}
K^{\ast}_{\{1\}}(\ast)\oplus K^{\ast}_{\{1\}}(\ast)\cong \Z\oplus \Z.
\end{gather*}
\end{Ex}

\begin{Ex} Suppose now that $G=D_{8}$ and $A=Z(D_{8})=\big\langle a^{2}\big\rangle\cong \Z/2$.
Therefore in this case we have
\begin{gather*}
Q=G/A=\{[1],[b],[a],[ab]\}\cong\Z/2\oplus \Z/2.
\end{gather*}
As in the previous example $\alpha|_{A}$ is trivial and
\begin{gather*}
\Irr^{\alpha}(A)=\Irr(A)=\{[1],[\sigma]\},
\end{gather*}
where $\sigma\colon A\rightarrow \C$ is the representation defined by $\sigma\big(a^{2}\big)=-1$.
For the action of $D_{8}$ on $\Irr^{\alpha}(A)$ we have
\begin{gather*}
b\cdot\sigma\big(a^{2}\big)=\alpha\big(ba^{2},b\big)\alpha\big(b,ba^{2}\big)^{-1}\sigma\big(ba^{2}b\big)
=\alpha\big(a^{2}b,b\big)\alpha\big(b,a^{2}b\big)^{-1}\sigma\big(a^{2}\big)
=(-1)(-1)=1
\end{gather*}
therefore $b\cdot \sigma=1$. In particular the action of
$D_{8}$ on $\Irr^{\alpha}(A)$ is transitive and we can choose $[1]$
as a representative for the set $D_{8}\backslash \Irr^{\alpha}(A)$. For the
representation $1$ we have $G_{[1]}=\langle a\rangle\cong \Z/4$.
If we take again $X=\ast$ then Theorem~\ref{ThD}
gives us an isomorphism
\begin{gather*}
\Psi\colon\ R^{\alpha}(D_{8})\cong \Z\tau_{1}\oplus\Z\tau_{2}
\stackrel{\cong}{\rightarrow}
K^{\ast}_{(\widetilde{G}_{[1]})_{\alpha},1}(\ast).
\end{gather*}
As $\alpha$ is trivial on $\langle a\rangle$ we have that
$\big(\widetilde{G}_{[1]}\big)_{\alpha}=\langle a\rangle\times \S^{1}$. If
$\rho\colon \langle a\rangle\to \C$ denotes the representation defined by $\rho(a)=i$
then $\tilde{1}$ and $\tilde{\rho^{2}}$ can be seen as
$\big(\big(\widetilde{G}_{[1]}\big)_{\alpha},1\big)$-vector bundles over $\ast$ and
\begin{gather*}
K^{\ast}_{(\widetilde{G}_{[1]})_{\alpha},1}(\ast)\cong \Z\tilde{1}\oplus \Z\tilde{\rho^{2}}.
\end{gather*}
Observe that
\begin{gather*}
 \arraycolsep=-.05ex
\tau_{1}(a^{2})=
\left(\,\begin{matrix}
 \fbox{$-1$} &0 \\[-.2ex]
 0 &\fbox{1} \end{matrix}\,\right) \qquad\text{and}\qquad
\tau_{2}(a^{2})=
\left(\,\begin{matrix}
 \fbox{1} &0 \\[-.2ex]
 0 & \fbox{$-1$}
 \end{matrix}\,\right)\!.
 \end{gather*}
Therefore as $A$-representations we have $\tau_{1}\cong\tau_{2}\cong 1\oplus \sigma$.
However, as $\langle a\rangle$-representations $\tau_{1}$ is isomorphic to
$1\oplus \rho$ and $\tau_{2}$ is isomorphic to $\rho^{2}\oplus\rho^{3}$. It follows that
in the isomorphism given by Theorem~\ref{ThD} we have
\begin{align*}
\Psi\colon\ R^{\alpha}(D_{8})\cong \Z\tau_{1}\oplus\Z\tau_{2}
&\stackrel{\cong}{\rightarrow}
K^{\ast}_{(\widetilde{G}_{[1]})_{\alpha},1}(\ast)\cong \Z\tilde{1}\oplus \Z\tilde{\rho^{2}},\\
\tau_{1}&\mapsto \tilde{1},\\
\tau_{2}&\mapsto \tilde{\rho^{2}}.
\end{align*}
On the other hand, by Theorem~\ref{Thmain} we have an isomorphism
\begin{gather*}
\Phi\colon\ R^{\alpha}(D_{8})\stackrel{\cong }{\rightarrow} \phantom{i}
^{\beta_{1,\alpha}}K^{\ast}_{Q_{[1]}}(\ast).
\end{gather*}
In this case $Q_{[1]}=\langle a\rangle/\big\langle a^{2}\big\rangle\cong \Z/2$.
The cocycle $\beta_{1,\alpha}$ is the trivial cocycle and
Theorem~\ref{Thmain} gives us an isomorphism
\begin{gather*}
\Phi\colon\ R^{\alpha}(D_{8})\cong \Z\tau_{1}\oplus\Z\tau_{2} \stackrel{\cong}{\rightarrow}
K^{\ast}_{\Z/2}(\ast)=R(\Z/2).
\end{gather*}
For the group $\Z/2$ we have $R(\Z/2)\cong \Z1\oplus \Z s$, where $s$ denotes the
sign representation. The isomorphism $\Phi$ maps $\tau_{1}$ to $1$ and $\tau_{2}$ to
$s$.
\end{Ex}

\begin{Ex} Example~\ref{Example1} can easily be generalized to the dihedral groups $D_{2n}$
with $n$ an~even number. Suppose then that $n$ is an even number and let $D_{2n}$
be the group generated by~the elements $a,b$ subject to the relations $a^{n}=b^{2}=1$
and $bab=a^{-1}$. Fix $\epsilon$ a primitive $n$-th root of unity and let
\begin{gather*}
\alpha\colon\ D_{2n}\times D_{2n}\rightarrow \S^{1}
\end{gather*}
be the function defined by
\begin{gather*}
\alpha\big(a^{j},a^{k}b^{l}\big)=1 \qquad \text{and} \qquad \alpha\big(a^{j}b,a^{k}b^{l}\big)
=\epsilon^{k}\qquad \text{for}\quad 0\leq j,k\leq n-1 \quad \text{and}\quad l=0,1.
\end{gather*}
The function $\alpha$ defines a normalized $2$-cocycle on $D_{2n}$ with values in $\S^{1}$ whose
corresponding cohomology class is the generator in $H^{2}\big(D_{2n},\S^{1}\big)\cong \Z/2$.
By Example~\ref{ej,D2n} we know that the irreducible projective $\alpha$-representations of
$D_{2n}$ are $\tau_{i}\colon D_{2n}\rightarrow {\rm GL}_{2}(\C)$ for $i=1,\dots, n/2$, defined~by
\begin{gather*}
\tau_{i}\big(a^{k}b^{l}\big)=A_{i}^{k}B_{i}^{l} \qquad\text{for} \quad 0\leq k\leq n-1 \quad \text{and}\quad l=0,1,
\end{gather*}
where
\begin{gather*}
A_{i}=
\begin{pmatrix}
\epsilon^{i} & 0\\
0 & \epsilon^{1-i}
\end{pmatrix}
\quad \text{and}\quad
B_{i}=
\begin{pmatrix}
0 & 1\\
1 & 0
\end{pmatrix}\!.
\end{gather*}
If we take $X=\ast$ endowed with the trivial $D_{2n}$ action we have
\begin{gather*}
\phantom{i}^{\alpha}K^{\ast}_{D_{2n}}(\ast)\cong R^{\alpha}(D_{2n})
=\Z\tau_{1}\oplus\cdots \oplus \Z\tau_{n/2}.
\end{gather*}
Take $A=\langle a\rangle \cong\Z/n$ so that $Q=D_{2n}/A=\{[1],[b]\}\cong \Z/2$.
Observe that $\alpha|_{A}$ is trivial and thus
\begin{gather*}
\Irr^{\alpha}(A)=\Irr(A)=\big\{[1],[\rho],\big[\rho^{2}\big],\dots, \big[\rho^{n-1}\big]\big\},
\end{gather*}
where $\rho(a)=\epsilon$. For the action of $D_{2n}$ on $\Irr^{\alpha}(A)$ we have that
\begin{gather*}
b\cdot 1=\rho,\qquad b\cdot \rho^{2}=\rho^{n-1},\qquad
\dots,\qquad b\cdot \rho^{n/2}=\rho^{n/2+1}
\end{gather*}
so that orbits of the $D_{2n}$ action on $\Irr^{\alpha}(A)$ are
$\big\{[1], [\rho]\big\}, \big\{\big[\rho^{2}\big], \big[\rho^{n-1}\big]\big\}, \dots, \big\{\big[\rho^{n/2}\big], \big[\rho^{n/2+1}\big]\big\}$.
We~can choose $[\rho], \big[\rho^{2}\big],\dots, \big[\rho^{n/2}\big]$ as representatives for the set
$D_{2n}\backslash \Irr^{\alpha}(A)$ and we have $G_{[\rho]}=G_{[\rho^{2}]}=\cdots =
G_{[\rho^{n/2}]}=A$. As $\alpha|_{A}$ is trivial we have
$\big(\widetilde{G}_{[\rho^{i}]}\big)_{\alpha}=A\times \S^{1}$ for $i=1, \dots, n/2$.
In this case Theorem~\ref{ThD} gives us the isomorphism
\begin{align*}
\Psi\colon\ R^{\alpha}(D_{2n})
=\Z\tau_{1}\oplus\cdots \oplus \Z\tau_{n/2}
&\stackrel{\cong}{\rightarrow}
\bigoplus_{i=1}^{n/2}K^{\ast}_{(\widetilde{G}_{[\rho^{i}]})_{\alpha},\rho^{i}}(\ast)
\cong \bigoplus_{i=1}^{n/2}
 \Z\tilde{\rho^{i}},
 \\
\tau_{i}&\mapsto \tilde{\rho}^{i}.
\end{align*}
On the other hand, for $i=1, \dots, n/2$ we have $Q_{[\rho^{i}]}=\{1\}$ and
$\beta_{\rho^{i},\alpha}$ is the trivial cocycle. In this case
Theorem~\ref{Thmain} gives us an isomorphism
\begin{gather*}
\Phi\colon\ R^{\alpha}(D_{2n})\cong \Z\tau_{1}\oplus\cdots \oplus \Z\tau_{n/2}
\stackrel{\cong}{\rightarrow} \bigoplus_{i=1}^{n/2}K^{\ast}_{\{1\}}(\ast)\cong \Z^{n/2}.
\end{gather*}
\end{Ex}

\subsection*{Acknowledgements}

The first author acknowledges and thanks the financial support provided by
MINCIENCIAS through grant number FP44842-013-2018 of the
Fondo Nacional de Financiamiento para la~Ciencia, la Tecnolog\'ia y la Innovaci\'on.
The second author acknowledges and thanks the financial support provided by MINCIENCIAS
through grant number 727 of the program Doctorados nacio\-nales 2015 of the Fondo Nacional de
Financiamiento para la Ciencia, la Tecnolog\'ia y~la~Inno\-vaci\'on. Additionally, the authors would
like to thank the referees and the editor for providing useful comments that helped improve this manuscript.

\pdfbookmark[1]{References}{ref}
\LastPageEnding

\end{document}